\numberwithin{equation}{section}
\newtheorem{thm}[equation]{Theorem}
\newtheorem{lem}[equation]{Lemma}
\newtheorem{prop}[equation]{Proposition}
\theoremstyle{definition}
\newtheorem{defn}[equation]{Definition}
\newtheorem{notation}[equation]{Notation}
\newenvironment{example}
  {\pushQED{\qed}\examplex}
  {\popQED\endexamplex}
\newenvironment{rmk}
  {\pushQED{\qed}\rmkx}
  {\popQED\endrmkx}
\DeclareSymbolFontAlphabet{\mathbb}{AMSb}
\DeclareSymbolFontAlphabet{\mathbbl}{bbold}
\newcommand\<{\langle}
\renewcommand\>{\rangle}
\newcommand\II{\ensuremath{\mathbb I}}
\newcommand\NN{\ensuremath{\mathbb N}}
\newcommand\RR{\ensuremath{\mathbb R}}
\newcommand\ZZ{\ensuremath{\mathbb Z}}
\newcommand\cC{\ensuremath{\mathscr C}}
\newcommand\cD{\ensuremath{D}}
\newcommand{\bba}{\ensuremath{\bm{a}}}
\newcommand{\boldb}{\ensuremath{\bm{b}}}
\newcommand{\bbc}{\ensuremath{\bm{c}}}
\newcommand{\bbe}{\ensuremath{\bm{e}}}
\newcommand{\bbj}{\ensuremath{\bm{j}}}
\newcommand{\bbm}{\ensuremath{\bm{m}}}
\newcommand{\boldzero}{\ensuremath{\bm{0}}}
\newcommand{\facets}{\mathscr{F}} 
\newcommand{\fan}{\Sigma} 
\newcommand{\kk}{\mathbbl{k}} 
\newcommand{\id}{\mathrm{id}} 
\newcommand{\inc}{\iota} 
\newcommand{\mc}{\mathscr{M}} 
\newcommand{\proj}{\pi} 
\newcommand{\tfr}{\kk[\mc]} 
\newcommand{\rcr}{$\kk$-algebra realized by retracts}
\newcommand{\rcrs}{$\kk$-algebras realized by retracts}
\DeclareMathOperator{\Hom}{Hom}
\DeclareMathOperator{\rad}{rad}
\DeclareMathOperator{\supp}{supp}
\DeclareMathOperator{\chara}{char}
\title[Differential operators, retracts \& toric face rings]{Differential operators, retracts, and toric face rings}
\author[Berkesch]{Christine Berkesch}
\address{School of Mathematics \\ University of Minnesota\\ Minneapolis, MN}
\email{cberkesc@umn.edu}
\author[Chan]{C-Y. Jean Chan}
\address{Department of Mathematics\\ Central Michigan University \\ Mt. Pleasant, MI}
\email{chan1cj@cmich.edu}
\author[Klein]{Patricia Klein}
\address{Department of Mathematics \\ Texas A\&M University \\ College Station, TX}
\email{pjklein@tamu.edu}
\author[Matusevich]{Laura Felicia Matusevich}
\address{Department of Mathematics \\ Texas A\&M University \\ College Station, TX}
\email{matusevich@tamu.edu}
\author[Page]{Janet Page}
\address{Department of Mathematics\\ North Dakota State University \\ Fargo, ND}
\email{janet.page@ndsu.edu}
\author[Vassilev]{Janet Vassilev}
\address{Department of Mathematics and Statistics \\ University of New Mexico \\ Albuquerque, NM}
\email{jvassil@math.unm.edu }
\thanks{
CB was partially supported by NSF Grants DMS 1661962 and 2001101. \\
\indent \ LFM was partially supported by the Simons Foundation Collaboration Grant for Mathematicians. \\
}
\date{}
\begin{document}
\begin{abstract}
We give explicit descriptions of rings of differential operators of toric face rings in characteristic $0$.  For quotients of normal affine semigroup rings by radical monomial ideals, we also identify which of their differential operators are induced by differential operators on the ambient ring.  Lastly, we provide a criterion for the Gorenstein property of a normal affine semigroup ring in terms of its differential operators.  

Our main technique is to realize the $\kk$-algebras we study in terms of a suitable family of their algebra retracts in a way that is compatible with the characterization of differential operators.  This strategy allows us to describe differential operators of any $\kk$-algebra realized by retracts in terms of the differential operators on these retracts, without restriction on $\chara(\kk)$.
\end{abstract}
\maketitle
\tableofcontents

\parskip .4em

\setcounter{section}{0}
\section*{Introduction}
\label{sec:intro}
Differential operators play a notable role in branches of mathematics as seemingly disparate as partial differential equations and local cohomology, dynamical systems and invariant theory. 
Recently, there have been many exciting new developments concerning differential operators and their applications in commutative algebra, including to topics such as Bernstein--Sato polynomials, connections between singularities and local cohomology, equivariant $D$-modules, and Hodge ideals (see, for example,~\cite{DGJ20,JS20,HJJacCrit,QGBSatoMon,QGBernSato,AHN+21,JQGNBBSato,BJN19,Rai16, MP19,LWEquiDmod,LRLNumsDRings,PeEquihyper,Per20,PR21HodgeIdeal,LPEquiDmod}).  
One obstruction to the even greater use of rings of differential operators is the notorious difficulty of computing them explicitly. 
In fact, there are very few classes of rings whose differential operators are systematically computed, namely polynomial rings, Stanley--Reisner rings, affine semigroup rings, and coordinate rings of curves (see, for example, ~\cite{Tripp,TrDM,Er_SRDops,Mussontori,Jones, Mussontv,MuVDBTor,smoothToricDmods, Sai-Tr-DASR,Sai-Tr-fgdo,Muhasky,MussonMorita,SmStDO,Er}). 
The goal of this article is to give a new class of explicitly computed rings of differential operators.

A \emph{retract} of a $\kk$-algebra $R$ is a subring of $R$ which is isomorphic to a quotient of $R$. 
We call $R$ a \emph{\rcr} if it can be embedded into a finite direct sum of domains, each of which is a retract of $R$. 
Algebra retracts are of considerable interest; it is common in the literature to prove desirable properties of retracts using information from the ambient ring (see, for example,~\cite{Cos77,herzog77,CH97,BG02,Ter02,Gup14,EpNgAlgRet}). 
In this work, we do the opposite; we use knowledge about the differential operators on algebra retracts to compute differential operators of an ambient ring (see Theorems~\ref{thm:firstCharacterization} and~\ref{thm:secondCharacterization}). 
Naturally, this approach is most productive when the differential operators on the retracts are known. 
This is the case for a combinatorially interesting class of {\rcrs} known as \emph{toric face rings}.

Toric face rings were first introduced by Stanley \cite{Stanley87-tfr}, and further developed in~\cite{BG02,IcRotfr,IcRoCMtfr,OkYaDCtfr,Ngseminormlctfr,NgKPtfr,YaDCtfr}, among others. 
They include both Stanley--Reisner rings and affine semigroup rings as special cases, thus bringing under a single umbrella two of the mainstays of combinatorial commutative algebra.
In this setting, the retracts we are interested in are affine semigroup rings, whose rings of differential operators are known in characteristic zero; here we use the presentation given by Saito and Traves in~\cite{Sai-Tr-DASR,Sai-Tr-fgdo}. 
Thus we can directly apply Theorem~\ref{thm:secondCharacterization} to compute the ring of differential operators of a toric face ring in Proposition~\ref{prop:tfr-diffops}.
As a consequence, we recover results on differential operators on Stanley--Reisner rings  given by Tripp, Eriksson, and Traves in \cite{Tripp,Er_SRDops,TrDM} over arbitrary fields. 

Theorem~\ref{thm:firstCharacterization} finds the differential operators of a {\rcr} as a subring of the direct sum of the rings of differential operators of the retracts. 
This direct sum of the retracts is in general much larger than the original ring. On the other hand, the differential operators on Stanley--Reisner rings and affine semigroup rings are induced from differential operators over their natural ambient rings.
In general, however, the richness of the direct sum is really needed. 
To illustrate this, we provide a description of which differential operators on a quotient of a normal affine semigroup ring by a radical monomial ideal are induced from the differential operators on the ambient semigroup ring (see Theorem~\ref{thm:normal-diffops}) and show that this does not necessarily give the whole ring of differential operators. 

\subsection*{Outline}
In Section \ref{sec:DopsalgebraRetracts}, we describe the ring of differential operators of a \rcr\ in terms of differential operators on these retracts. 
In Section~\ref{sec:apps}, we apply the results of Section \ref{sec:DopsalgebraRetracts} in characteristic zero to compute the rings of differential operators of Stanley--Reisner rings and of toric face rings more generally.
In Section~\ref{sec:SmStmod}, we consider the quotient of a normal affine semigroup ring $R$ by a radical monomial ideal $J$ in characteristic zero and provide an explicit formula for the differential operators on $R/J$ that are induced by operators on $R$. 
Finally, in Section \ref{sec:Gorenstein}, 
we use differential operators to provide a new condition for the Gorenstein property of an affine normal semigroup ring. 

Throughout this paper, let $\kk$ denote an algebraically closed field; in Sections \ref{sec:apps}, \ref{sec:SmStmod}, and \ref{sec:Gorenstein}, assume that the characteristic of $\kk$ is zero. 

\subsection*{Acknowledgements}
We would like to thank the organizers, Karen Smith, Sandra Spiroff, Irena Swanson, and Emily Witt, of \emph{Workshop: Women in Commutative Algebra (WICA)}, where this work commenced; additionally, we appreciate the hospitality of BIRS in hosting this meeting. 
We thank AIM for supporting our collaboration, which greatly helped the advancement of this project. 
We are grateful to Jack Jeffries for valuable discussions that clarified our results; 
we also thank Monica Lewis, Pasha Pylyavskyy, and Vic Reiner for helpful conversations.  Finally, we thank the anonymous referee for feedback on an earlier version of our manuscript.  

\section{\texorpdfstring{\rcrs}{k-algebras realized by retracts}}
\label{sec:algebraRetracts}
In this section we introduce \emph{\rcrs} and provide examples. 
Let $S$ and $R$ be $\kk$-algebras. 
An injective $\kk$-algebra homomorphism $\iota\colon S \to R$ is called an \emph{algebra retract} of $R$ if there exists a surjective homomorphism of $\kk$-algebras $\pi\colon R \to S$ such that $\pi \circ \iota = \id_S$. 
When $S$ and $R$ are graded, we also assume that the homomorphisms $\iota$ and $\pi$ are also graded. 
See \cite{herzog77} for a local version of this definition.

\begin{defn}
\label{def:rcr}
Let $R$ be a $\kk$-algebra, and
let $\{S_\ell \}_{\ell \in \Lambda}$ be a finite collection of domains that are algebra retracts of $R$ with defining $\kk$-algebra homomorphisms $\iota_\ell\colon S_\ell \hookrightarrow R$ and $\pi_\ell\colon R\twoheadrightarrow S_\ell$ satisfying $\pi_\ell \circ \iota_\ell=\id_{S_\ell} $. For each $\ell$, let $P_\ell := \ker \pi_\ell$. 
We call $R$ a \emph{\rcr} and say that $R$ is \emph{realized by the retracts $\{S_\ell \mid \ell \in \Lambda\}$} when the following two conditions hold. 
\begin{enumerate}
\item[(i)] 
The following map is injective: 
\begin{equation} 
\label{eqn:intoretractsum}
    \phi\colon R \rightarrow \bigoplus\limits_{\ell \in \Lambda} S_\ell 
    \quad \text{given by} \quad 
    f \mapsto (\pi_\ell (f) \mid \ell \in \Lambda); 
\end{equation} 
in other words, $\displaystyle \bigcap_{\ell \in \Lambda} P_\ell = 0$.
\item[(ii)] 
The $S_\ell$ are irredundant for \eqref{eqn:intoretractsum}; 
more precisely, for each $i \in \Lambda$, we have $\displaystyle \bigcap_{\substack{\ell \neq i \\ \ell \in \Lambda}} P_\ell \neq 0$. 
\end{enumerate}
Since $\iota_\ell$ is injective, if $f \in S_\ell$, we write $f=\iota_\ell(f)$ in $R$.  For $f \in R$, we use $\phi(f)_i$ to denote the $i^{th}$ coordinate of $\phi(f)$ in $\bigoplus\limits_{\ell \in \Lambda} S_\ell$.
\end{defn}

For example, suppose that $R$ is a quotient of a $\kk$-algebra $T$ by a radical ideal $I$ with associated primes $P_1,\dots,P_r$. 
Now if $T/P_i$ are algebra retracts of $R$ for all $1\leq i\leq r$, then $R$ is realized by the retracts $\{S_\ell\mid \ell\in\Lambda\} = \{T/P_1,T/P_2,\dots,T/P_r\}$. 
In the remainder of this section, we show that Stanley--Reisner rings and toric face rings are \rcrs.

\subsection{Stanley--Reisner rings as \texorpdfstring{\rcrs}{k-algebras realized by retracts}}
\label{subs:SRringsasrcrs}
Let $\Delta$ be a simplicial complex on a finite vertex set $V = \{1,2,\dots,d\}$. 
The \emph{Stanley--Reisner ring of $\Delta$} is the $\kk$-algebra given by 
\[
\kk[\Delta] := \frac{\kk[t_1,t_2,\dots,t_d]}{\<t^{\bba}\mid \bba \in\NN^d, \supp(\bba)\notin\Delta\>},
\]
where $t^{\bba} = t_1^{a_1}t_2^{a_2}\cdots t_d^{a_d}$ and 
$\supp(\bba) := \{i\in V\mid a_i\neq 0\}$. Retracts have been previosly studied in this context. For example,
in \cite{EpNgAlgRet}, Epstein and Nguyen show that every graded algebra retract of a Stanley--Reisner ring $\kk[\Delta]$ is a Stanley--Reisner ring. 
Further, all such retracts are isomorphic to $\kk[\Delta|_W]$, where $\Delta|_W$ is the restriction of $\Delta$ to a subset $W$ of $V$. 

One way to view $\kk[\Delta]$ as a \rcr\ is via the facets of $\Delta$. 
If $\{F_\ell\}_\ell\in\Lambda$ denotes the collection of facets of $\Delta$, then 
$\kk[F_\ell] \cong \kk[t_i\mid i \in F_\ell]$, 
and  
\begin{align*}
\iota_\ell\colon \kk[F_\ell]\hookrightarrow \kk[\Delta] 
\quad\text{and}\quad 
\pi_\ell\colon \kk[\Delta]&\twoheadrightarrow \kk[F_\ell]\\
t_i &\mapsto 
    \begin{cases} 
    t_i &\text{if }i\in F_\ell,\\ 
    0 &\text{otherwise} 
    \end{cases}
\end{align*}
are the maps needed to see that 
\[
\phi\colon \kk[\Delta] \to \bigoplus_{\ell\in\Lambda} \kk[F_\ell] 
\quad\text{given by}\quad 
t_i\mapsto (\pi_\ell(t_i)\mid \ell\in\Lambda) 
\]
is an injective map. Since the retracts are domains, this implies that $\kk[\Delta]$ is a \rcr. 

\subsection{Toric face rings as \texorpdfstring{\rcrs}{k-algebras realized by retracts}}
\label{subs:tfrasrcrs}
The building blocks of toric face rings are affine semigroup rings, so we begin with those. \begin{notation}
\label{not:affine-semigroup-ring}
Let $M$ be a finitely generated submonoid of $\ZZ^d$.
The \emph{affine semigroup ring defined by $M$} is
\[
\kk[M] 
:= \bigoplus_{\bba\in M} \kk \cdot {t}^{\bba}, 
\]
where ${t}^{\bba} = t_1^{a_1}t_2^{a_2} \cdots t_d^{a_d}$ for ${\bba}= (a_1,a_2, \ldots, a_d)$.
\end{notation}

Finitely generated submonoids of $\ZZ^d$ are usually called affine semigroups. It is convenient to 
view the generators of an affine semigroup $M$ as the columns of a $d \times n$ integer matrix $A$; in this case, we use the notation $M = \mathbb{N}A$.
Throughout this article, we assume that the group generated by the columns of $A$ is the full ambient lattice, so $\ZZ A = \ZZ^d$ and also that the real positive cone over $A$, $\RR_{\geq 0} A$, is pointed, meaning that it contains no lines.  

A semigroup $\NN A$ is \emph{normal} if $\NN A = \RR_{\geq 0} A \cap\ZZ A$. In this case, the semigroup ring $\kk[\NN A]$ is normal in the sense of commutative algebra.

A hyperplane $H$ in $\RR^d$ is a supporting hyperplane of $\RR_{\geq 0}A$ if this cone lies entirely in one of the closed half spaces defined by $H$.
A \emph{face} $\sigma$ of $\RR_{\geq 0} A$ (or $A$ or $\NN A$) is the intersection of $\NN A$ with a supporting hyperplane of $\RR_{\geq 0}A$. 
Such a face is called a \emph{facet} if its linear span has dimension $d-1$. 
This is somewhat nonstandard terminology, as our faces and facets are submonoids of $\NN A$ instead of cones. 

Recall that the $\ZZ^d$-graded prime ideals in $\kk[\NN A]$ are in one-to-one correspondence with the faces of $A$ (or $\RR_{\geq 0} A$)~ \cite[Proposition 1.5]{Ishida}, as a face $\tau$ of $A$ corresponds to the multigraded prime $\kk[\NN A]$-ideal 
$P_\tau = \left\<t^{\bba}\, \big\vert \,  \bba\in\NN A\setminus \tau \right\>$.

Next, let $\fan \subset \RR^d$ be a rational polyhedral fan consisting of strongly convex (or pointed) cones. 
A \emph{monoidal complex} $\mc$ supported on $\fan$ is a collection of monoids $\{M_\sigma \mid \sigma \in \fan\}$ such that
\begin{itemize}
    \item[ (i)] $M_\tau \subseteq \tau \cap \ZZ^d$ and $\RR_{\geq 0} M_\tau = \tau$;
    \item[(ii)] if $\sigma, \tau \in \fan$ and $\sigma \subseteq \tau$, then $M_\sigma = \sigma\cap M_\tau$.
\end{itemize}
Denote $|\mc| = \bigcup_{\tau \in \fan} M_\tau$. 

\begin{defn}
\label{def:tfr}
The \emph{toric face ring} of $\mc$ over $\kk$ is given as a graded vector space by
\[ 
\tfr = \bigoplus_{\bba \in |\mc|} \kk \cdot t^{\bba}
\]
with multiplication defined by 
\[
t^{\bba} \cdot t^{\boldb} = 
\left\{
\begin{array}{ll} 
t^{\bba+\boldb}  &  \text{if there is } \tau \in \fan \text{ such that } \bba, \boldb \in \tau,\\
0 & \text{otherwise.}
\end{array}
\right.
\]
\end{defn}

Given $\tau \in \fan$, the semigroup ring $\kk[M_\tau]$ is a subring of $\tfr$; we denote by 
\[
\inc_{\tau}\colon \kk[M_\tau] \hookrightarrow \tfr
\]
the natural inclusion.
For $\tau \in \fan$, let 
\[
P_\tau := \< t^{\bba} \mid \bba \in |\mc| \setminus M_\tau \>.
\]
Then $\tfr/P_\tau$ is isomorphic to the semigroup ring $\kk[M_\tau]$, so that $P_\tau$ is a monomial prime ideal of $\tfr$. 
We identify $\tfr/P_\tau$ with $\kk[M_\tau]$, and denote by
\[
\proj_\tau\colon \tfr \twoheadrightarrow \kk[M_\tau]
\]
the natural projection onto the quotient. Clearly, $\kk[M_\tau]$ is a retract of $\tfr$.

A \emph{facet} of $\fan$ is a cone in $\fan$ which is maximal with respect to inclusion among all elements of $\fan$.
Let $\facets(\fan)$ denote the collection of all facets of $\fan$.
Since $\bigcap_{\tau \in \facets(\fan)} P_\tau = 0$, the ring homomorphism
\begin{equation}
    \label{eqn:intoSum}
\phi\colon \tfr \to \bigoplus_{\tau \in \facets(\fan)} \kk[M_\tau]  
\quad\text{given by} \quad  
f \mapsto(\proj_\tau(f) \mid \tau \in \facets(\fan))
\end{equation}
is injective. It follows that $\tfr$ is a {\rcr}.

In general, the algebra retracts of toric face rings of $\kk[\mc]$ are given by restricting $\mc$ to a subfan $\Gamma$ of $\Sigma$
~\cite[Proposition~4.5]{EpNgAlgRet}. 
The retracts that we consider here are those given by the restriction of $\Sigma$ to one of its maximal cones $\Gamma$, yielding an affine semigroup ring. 

\begin{example}
\label{rem:tfr-SR}
Stanley--Reisner rings of simplicial complexes are toric face rings. 
To see this, let $\Delta$ be a simplicial complex on the vertex set $V = \{1,2,\dots,d-1\}$. 
For each subset $F$ of $V$, associate the pointed cone $C_F$ generated by the set of elements of the form $\bbe_i + \bbe_d$ for $i \in F$, where $\bbe_i$ denotes the $i$-th standard basis vector in $\RR^d$. 
If $\Sigma$ denotes the fan in $\RR^d$ consisting of the cones $C_F$ for $F \in \Delta$, then the toric face ring $\kk[\mc]$ is isomorphic to the Stanley--Reisner ring $\kk[\Delta]$. 
\end{example}

\begin{example}
\label{ex:tfr-semigroup}
On the other hand, when $\Sigma$ has a unique maximal cone, then $\kk[\mc]$ is simply an affine semigroup ring. 
An affine semigroup ring $\kk[\NN A]$ modulo a radical monomial ideal $J$ is also a toric face ring. 
In this case, if the ideal $J = \bigcap_{i=1}^r P_{\tau_i}$, then the fan $\Sigma$ consists of faces of the cone $\RR_{\geq 0} A$ that are contained in $\tau_i$ for some $1\leq i\leq r$. 
We will examine this case more closely in Sections~\ref{sec:SmStmod} and~\ref{sec:Gorenstein}. 
\end{example}

\section{Rings of differential operators on \texorpdfstring{\rcrs}{k-algebras realized by retracts}}
\label{sec:DopsalgebraRetracts}
Fix a $\kk$-algebra $R$ and $R$-module $M$, and note that in this section we have no requirements on the characteristic of $\kk$ unless explicitly mentioned. 
The $\kk$-linear differential operators $D(R,M)$ are defined inductively by the degree $i$ of the operator. 
The degree $0$ differential operators are 
\[
\cD^0(R,M) := \Hom_R(R,M),
\] 
and, for $i>0$, the degree $i$ differential operators are 
\[
\cD^i(R,M) 
:= 
\{\delta \in \Hom_\kk(R,M) \mid [\delta,r] \in \cD^{i-1}(R,M) \text{ for all } r \in R\},
\]  
where $[\delta,r] := \delta \circ r - r \circ \delta$ is the commutator. 
The module of differential operators on $M$, denoted $\cD(R, M)$, is $\displaystyle \bigcup_{i \geq 0} D^i(R,M)$.  We write $\cD(R)$ for $\cD(R,R)$.  Note that $\cD(R,-)$ is a left-exact functor, and that, in particular, for any $R$-ideal $J$, $\cD(R, J) = \{\delta \in \cD(R) \mid \delta ( R )\subset J \}$.

\begin{example}
If $\chara(\kk)=0$, the ring of differential operators of the polynomial ring in $d$ variables with coefficients in $\kk$ is the Weyl algebra 
\[
W=\kk[t_1,\ldots,t_d]\< \partial_1, \ldots, \partial_d \>,
\]
where the relations defined on the generators are $t_it_j-t_jt_i=0=\partial_i\partial_j-\partial_j\partial_i$, and $\partial_it_j-t_j\partial_i=\delta_{ij}$, the Kronecker-$\delta$ function.  The ring of differential operators of the ring of Laurent polynomials in $d$ variables is the extended Weyl algebra $\kk[t_1^{\pm 1},\ldots,t_d^{\pm 1}]\< \partial_1, \ldots, \partial_d\>$ with the relations as in the ordinary Weyl algebra together with the additional relation $ t_i^{-1}\partial_j-\partial_jt_i^{-1}=t_i^{-2}\delta_{ij}$. 
\end{example}

Let $R$ be a $\kk$-algebra realized by the retracts $\{S_\ell \mid \ell \in \Lambda\}$, as in Definition~\ref{def:rcr}.
Given $\ell \in \Lambda$, and $\delta \in D(R)$, set $\delta_\ell=\pi_\ell \circ \delta\circ \iota_\ell$. In this section, we show that the  map:
\begin{equation}
\label{eqn:intoDiffRetracts}
\cD(R) \to \bigoplus_{\ell\in \Lambda} \cD(S_\ell)
\quad \text{given by}\quad
\delta \mapsto (\delta_\ell := \proj_\ell \circ \delta \circ \inc_\ell \mid \ell \in \Lambda),
\end{equation} 
is injective and
compute the ring of differential operators $\cD(R)$ in terms of the rings of differential operators 
$\cD(S_\ell)$ for $\ell \in \Lambda$.
We  do this in two ways (see Theorems~\ref{thm:firstCharacterization} and~\ref{thm:secondCharacterization}). First, we include two lemmas.

\begin{lem}
\label{lemma:partialOp1}
 Assume that the $\kk$-algebra $R$ is realized by the retracts $\{S_\ell \mid \ell \in \Lambda\}$. Let $\delta \in \cD^i(R)$. If $\ell \in \Lambda$, then $\delta_\ell \in \cD^i(S_\ell)$.
\end{lem}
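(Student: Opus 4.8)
The plan is to prove the statement by induction on the differential order $i$, mirroring the inductive definition of $\cD^i$. For the base case $i = 0$, any $\delta \in \cD^0(R) = \Hom_R(R, R)$ is multiplication by some $r \in R$; then $\delta_\ell = \proj_\ell \circ \delta \circ \inc_\ell$ sends $s \in S_\ell$ to $\proj_\ell(r \cdot \inc_\ell(s)) = \proj_\ell(r) \cdot s$ since $\proj_\ell$ is a ring homomorphism and $\proj_\ell \circ \inc_\ell = \id_{S_\ell}$. Hence $\delta_\ell$ is multiplication by $\proj_\ell(r) \in S_\ell$, so $\delta_\ell \in \cD^0(S_\ell)$.

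For the inductive step, suppose the claim holds for order $i - 1$ and let $\delta \in \cD^i(R)$. I need to show $[\delta_\ell, s] \in \cD^{i-1}(S_\ell)$ for every $s \in S_\ell$. The key computation is to relate the commutator $[\delta_\ell, s]$ taken in $\Hom_\kk(S_\ell, S_\ell)$ to the commutator $[\delta, \inc_\ell(s)]$ taken in $\Hom_\kk(R, R)$. Using $\proj_\ell \circ \inc_\ell = \id_{S_\ell}$ and the fact that $\proj_\ell$ and $\inc_\ell$ are $\kk$-algebra maps, one checks on an element $u \in S_\ell$ that
\[
[\delta_\ell, s](u) = \proj_\ell\bigl(\delta(\inc_\ell(s)\cdot \inc_\ell(u))\bigr) - s\cdot \proj_\ell\bigl(\delta(\inc_\ell(u))\bigr) = \proj_\ell\bigl([\delta, \inc_\ell(s)](\inc_\ell(u))\bigr),
\]
where in the last step I use that $s = \proj_\ell(\inc_\ell(s))$ and that $\proj_\ell$ respects multiplication, so the term $\proj_\ell(\inc_\ell(s)\cdot \delta(\inc_\ell(u)))$ equals $s \cdot \proj_\ell(\delta(\inc_\ell(u)))$. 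This identifies $[\delta_\ell, s] = (\,[\delta, \inc_\ell(s)]\,)_\ell$, i.e. the order-$\ell$ component of the operator $[\delta, \inc_\ell(s)]$. Since $\delta \in \cD^i(R)$ gives $[\delta, \inc_\ell(s)] \in \cD^{i-1}(R)$, the inductive hypothesis yields $[\delta_\ell, s] \in \cD^{i-1}(S_\ell)$, completing the induction.

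I expect the main obstacle to be bookkeeping with the inclusion and projection maps in the commutator computation: one must be careful that $\proj_\ell$ is only a ring homomorphism (not compatible with $\delta$ in any way a priori), that $\inc_\ell(s) \cdot \inc_\ell(u) = \inc_\ell(su)$, and that the splitting identity $\proj_\ell \circ \inc_\ell = \id$ is used in exactly the right places to collapse the cross terms. Once the identity $[\delta_\ell, s] = ([\delta, \inc_\ell(s)])_\ell$ is established cleanly, the rest is a direct appeal to the inductive hypothesis. No hypothesis on $\chara(\kk)$ is needed, and in fact conditions (i) and (ii) of Definition~\ref{def:rcr} play no role here — only that each $S_\ell$ is a retract of $R$ via $\proj_\ell, \inc_\ell$.
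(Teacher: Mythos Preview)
Your proof is correct and follows essentially the same approach as the paper: induction on the order $i$, with the base case handled by multiplication-by-$r$ and the inductive step reduced to the identity $[\delta_\ell, s] = ([\delta, \inc_\ell(s)])_\ell$. The only cosmetic difference is that you verify this identity by evaluating on an element $u \in S_\ell$, whereas the paper carries out the same computation at the level of operators.
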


\begin{proof}
We use induction on $i$. 
If $\delta \in \cD^0(R)=\Hom_{R}(R,R)$, then $\delta$ is given by multiplication by a fixed element of $R$, say $r$. 
But then $\delta_\ell$ is given by multiplication by $\proj_\ell(r)$, and so $\delta_\ell \in \cD^0(S_\ell)$.

Now assume the result is true for operators of order $i-1$, and let $\delta \in \cD^i(R)$. 
The $\kk$-linearity of $\delta_\ell$ follows from $\kk$-linearity of $\delta$, since $\inc_\ell$ and $\proj_\ell$ are $\kk$-linear.
To show that $\delta_\ell \in \cD^i(S_\ell)$,
it is enough to verify that $[\delta_\ell,f] \in \cD^{i-1}(S_\ell)$ for all $f \in S_\ell$.
This follows by induction since, for $f \in S_\ell$, $[\delta_\ell,f] = [\delta,\iota_\ell(f)]_\ell$.  To see this, note that $\iota_\ell \circ f$ is the map $\iota_\ell(f)\iota_\ell$ and for any $g \in R$, $\pi_\ell \circ g$ is the map $\pi_\ell(g) \pi_\ell$.  Then we have
\begin{align*}
[\delta_\ell,f] &= \pi_\ell \circ \delta \circ \iota_\ell \circ f - f \circ \pi_\ell \circ \delta \circ \iota_\ell \\
&= \pi_\ell \circ \delta \circ \iota_\ell(f) \iota_\ell - \pi_\ell(\iota_\ell(f)) \pi_\ell \circ \delta \circ \iota_\ell \\
&= \pi_\ell \circ \delta \circ \iota_\ell(f) \iota_\ell - \pi_\ell \circ \iota_\ell(f)\delta \circ \iota_\ell \\
&=\pi_\ell \circ \left(\delta \circ \iota_\ell(f)  -  \iota_\ell(f)\delta \right) \circ \iota_\ell \\
&= [\delta, \iota_\ell(f)]_\ell.
\end{align*}
\end{proof}

\begin{lem}
\label{lemma:partialOp2}
Assume that the $\kk$-algebra $R$ is realized by the retracts $\{S_\ell \mid \ell \in \Lambda\}$.
Let $\delta \in \cD^i(R)$, $\ell \in \Lambda$, and $f \in R$.  
If $\proj_\ell(f) = 0$, then $\proj_\ell(\delta(f)) = 0$.
\end{lem}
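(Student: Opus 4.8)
The statement to prove is that if $\proj_\ell(f) = 0$ for some $f \in R$ and $\delta \in \cD^i(R)$, then $\proj_\ell(\delta(f)) = 0$; equivalently, $\delta(P_\ell) \subseteq P_\ell$, i.e., each $P_\ell = \ker \proj_\ell$ is a differential ideal. The plan is to induct on the order $i$ of $\delta$, mirroring the structure of Lemma~\ref{lemma:partialOp1}. The base case $i = 0$ is immediate: a degree-$0$ operator is multiplication by a fixed $r \in R$, and $P_\ell$ is an ideal, so $\delta(f) = rf \in P_\ell$ whenever $f \in P_\ell$.

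For the inductive step, I would use the fact that $P_\ell$ is generated (as an ideal, indeed as a $\kk$-vector space, since it is monomial in the toric case — but more robustly, as an ideal) by elements on which we can get a handle, together with the defining splitting $\pi_\ell \circ \iota_\ell = \id_{S_\ell}$. The cleanest route avoids choosing generators: write an arbitrary $f \in P_\ell$ and note that $f = f - \iota_\ell(\pi_\ell(f)) = f - \iota_\ell(0) = f$ trivially, so instead the key identity I want is that for \emph{any} $g \in R$, the element $g - \iota_\ell(\pi_\ell(g))$ lies in $P_\ell$, and conversely $P_\ell$ consists exactly of such elements (since $\pi_\ell(g - \iota_\ell \pi_\ell(g)) = \pi_\ell(g) - \pi_\ell(g) = 0$ using $\pi_\ell \iota_\ell = \id$). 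Now apply $\delta$: it suffices to show $\proj_\ell\big(\delta(g - \iota_\ell \pi_\ell(g))\big) = 0$ for all $g \in R$. Using that $\iota_\ell \pi_\ell(g)$ is an element $s := \pi_\ell(g) \in S_\ell$ viewed inside $R$, we compute the commutator $[\delta, s] = \delta \circ s - s \circ \delta$, which by definition lies in $\cD^{i-1}(R)$. Evaluating at $1 \in R$ gives $\delta(s) - s\,\delta(1) = [\delta, s](1)$, so $\delta(s) = s\,\delta(1) + [\delta,s](1)$. Then $\delta(f) = \delta(g) - \delta(s) = \delta(g) - s\,\delta(1) - [\delta,s](1)$, and applying $\proj_\ell$: the term $\proj_\ell(s\,\delta(1)) = \proj_\ell(s)\proj_\ell(\delta(1)) = \pi_\ell(\iota_\ell \pi_\ell g)\cdot \proj_\ell(\delta(1)) = \pi_\ell(g)\proj_\ell(\delta(1))$; and $\proj_\ell(\delta(g))$ should similarly be expressible. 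Here I need to be a little careful, because $g$ is arbitrary in $R$, not in $P_\ell$, so $\proj_\ell(\delta(g))$ need not vanish — the cancellation must come from combining $\delta(g)$ and $\delta(s)$ correctly.

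Let me instead run the induction on the generating elements of the ideal directly, which is the more standard and transparent argument. It suffices to prove $\proj_\ell(\delta(xf)) = 0$ whenever $\proj_\ell(\delta(f)) = 0$ for all $\delta \in \cD^{i}(R)$ and all $x \in R$; then since $P_\ell$ is generated by a set of elements $\{f_j\}$ each satisfying $\proj_\ell(f_j) = 0$, and $\delta$ is additive, one reduces to $P_\ell$-generators, for which the claim is: if $\proj_\ell(h) = 0$ then $\proj_\ell(\delta(h)) = 0$. For a generator $h$ with $\proj_\ell(h)=0$ and $\delta \in \cD^i$, write $\delta(h) = [\delta, h](1) + h\,\delta(1)$. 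The second term has $\proj_\ell(h\,\delta(1)) = \proj_\ell(h)\proj_\ell(\delta(1)) = 0$. For the first term, $[\delta,h] \in \cD^{i-1}(R)$, and I would like to say $\proj_\ell([\delta,h](1)) = 0$ — but this requires knowing $1 \in P_\ell$, which is false. So the honest reduction is: to show $\proj_\ell \delta(h) = 0$ it suffices, writing $h = \sum_j a_j h_j$ with $h_j$ ideal generators of $P_\ell$ and $a_j \in R$, to show $\proj_\ell \delta(a_j h_j) = 0$; and by induction on $i$ together with $\delta(a h_j) = \delta \circ h_j (a) + [\,h_j, \delta\,] \cdot$ adjustments... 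The main obstacle, and the point the proof must handle cleanly, is exactly this: making the commutator identity $\delta(xf) = x\,\delta(f) + [\delta,x](f)$ do the work, using that $[\delta,x] \in \cD^{i-1}(R)$ and applying the inductive hypothesis to $[\delta,x]$ acting on $f \in P_\ell$ (giving $\proj_\ell([\delta,x](f)) = 0$), while $\proj_\ell(x\,\delta(f)) = \proj_\ell(x)\proj_\ell(\delta(f)) = 0$ by the $i$-level hypothesis applied to $\delta$ on $f$ — so the induction is really on $i$, and the inductive step is precisely the product rule. I would organize it as: induct on $i$; base case as above; for the step, it suffices by $\kk$-linearity and the fact that $P_\ell$ is an ideal to treat $f = x h$ with $h \in P_\ell$ a generator and $x \in R$ arbitrary, then $\delta(xh) = x\delta(h) + [\delta,x](h)$ with $[\delta,x] \in \cD^{i-1}(R)$, so $\proj_\ell([\delta,x](h)) = 0$ by the inductive hypothesis, and $\proj_\ell(x\delta(h)) = \proj_\ell(x)\proj_\ell(\delta(h))$, reducing to the case $f = h$ a single generator — and for a single generator $h$, I would either invoke that in all cases of interest $P_\ell$ is monomial so generators are themselves of the form handled, or argue once more via $\delta(h) = h\delta(1) + [\delta,h](1)$ noting $[\delta,h] \in \cD^{i-1}$ and... the genuinely delicate bookkeeping lives here, and I expect to spend most of the writeup making this last reduction airtight.
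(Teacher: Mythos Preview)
Your inductive product-rule approach has a genuine gap that cannot be closed by ``bookkeeping.'' The reduction $\delta(xh) = x\,\delta(h) + [\delta,x](h)$ does let you pass from products $xh$ back to a generator $h$ (the second term is handled by the inductive hypothesis on $[\delta,x] \in \cD^{i-1}(R)$ acting on $h \in P_\ell$). But for the generator itself you propose $\delta(h) = h\,\delta(1) + [\delta,h](1)$, and here the inductive hypothesis does \emph{not} apply: $[\delta,h] \in \cD^{i-1}(R)$ is acting on $1 \notin P_\ell$. Unwinding, you need $\proj_\ell([\delta,h](1)) = 0$, which is exactly $\proj_\ell(\delta(h)) = 0$ again---the argument is circular. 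More tellingly, your outline never invokes the hypotheses that each $S_\ell$ is a domain and that the $P_\ell$ are irredundant (condition~(ii) in Definition~\ref{def:rcr}); without these, the statement is simply false (e.g.\ $R=\kk[x]$, $P=(x)$, $\delta=\partial_x$), so any argument that ignores them cannot succeed.

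The paper's proof uses a different commutator. Given $f \in P_\ell$, it exploits the irredundancy condition to produce a nonzero $g \in \bigcap_{f \notin P_j} P_j$ with $\proj_\ell(g) \neq 0$; then $fg \in \bigcap_j P_j = 0$, so $[\delta,g](f) = \delta(gf) - g\,\delta(f) = -g\,\delta(f)$. Now the inductive hypothesis \emph{does} apply, since $[\delta,g] \in \cD^{i-1}(R)$ is acting on $f \in P_\ell$, giving $0 = \proj_\ell(g)\,\proj_\ell(\delta(f))$; the domain hypothesis on $S_\ell$ lets you cancel $\proj_\ell(g) \neq 0$. The key move---taking the commutator with an auxiliary zero-divisor $g$ rather than with $f$ itself, so that the operator of lower order is evaluated at $f \in P_\ell$---is precisely what your approach is missing.
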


\begin{example}
\label{ex:2vars}
Lemma~\ref{lemma:partialOp2} is a key ingredient for the main results of this section and provides an easily checked necessary condition for a linear operator on $R$ to be a differential operator on $R$. To see this more concretely, consider $R=\kk[x,y]/\<xy\>$. This Stanley--Reisner ring is realized by the retracts $\kk[x]\cong R/\langle y\rangle$ and $\kk[y]\cong R/\langle x\rangle$, where the canonical projections are denoted $\proj_x$ and $\proj_y$, respectively. The ring of differential operators on $R$ is generated as a $\kk$-algebra by the operators $x^i\partial_x^j$ and $y^k\partial_y^\ell$ such that $i\geq j$ and $k \geq \ell$. To see why $\partial_x$ is not a differential operator on $R$, note that $\proj_y(x)=0$, but
$\proj_y(\partial_x(x)) = \proj_y(1) \neq 0$, contradicting the conclusion of Lemma~\ref{lemma:partialOp2}.
\end{example}

\begin{proof}[Proof of Lemma~\ref{lemma:partialOp2}]
By induction on $i$ as before, if $i=0$, then 
$\delta(f) = r \cdot f$ for a fixed $r\in R$. 
If $\proj_\ell(f) = 0$, it follows that
$\proj_\ell(r\cdot f) = 0$, since $\proj_\ell$ is a ring homomorphism.

Now assume the result is true for operators of order $i-1$, and let $\delta \in \cD^i(R)$.
Suppose $\proj_\ell(f) = 0$ and let $P = \displaystyle \bigcap_{f \notin P_j} P_j$ where $P_j=\ker \pi_j$. Let $g$ be a nonzero element in $P$ such that $\pi_\ell(g) \neq 0$.  Such an element exists since $R$ is realized by the retracts $\{S_\ell\mid \ell\in\Lambda\}$.
Then since $f \cdot g \in \left( \displaystyle \bigcap_{f \in P_i} P_i \right) \left( \displaystyle \bigcap_{f \notin P_j} P_j \right) \subseteq \displaystyle \bigcap_{\ell \in \Lambda} P_\ell =0$, we have $f \cdot g =0$ in $R$.  
By the inductive hypothesis, $\proj_\ell([\delta,g](f))=0$. 
Then
\[
[\delta,g](f) = \delta(g \cdot f) - g \delta(f) = - g \delta(f),
\]
where the last equality holds because $\delta(g \cdot f)=\delta(0)=0$. 
Hence
\[
0 = \proj_\ell([\delta,g](f)) = \proj_\ell(- g\delta(f)) = - \proj_\ell(g) \proj_\ell(\delta(f)).
\]
As $g$ was chosen so that $\proj_\ell(g) \neq 0$, it follows that $\proj_\ell(\delta(f))=0$ since $S_\ell$ is a domain.
\end{proof}

We are now ready to give a first description of $D(R)$.

\begin{thm}
\label{thm:firstCharacterization}
 Assume that the $\kk$-algebra $R$ is realized by the retracts $\{S_\ell \mid \ell \in \Lambda\}$ with the injective map 
 $\phi\colon R \rightarrow \bigoplus\limits_{\ell \in \Lambda} S_\ell$.
 Let $\delta\colon R \to R$ be $\kk$-linear. Then $\delta \in \cD^i(R)$ if and only if the following two conditions hold:
\begin{enumerate}
    \item $\delta_\ell \in \cD^i(S_\ell)$ for all $\ell \in \Lambda$; 
    \label{cond:1}
    \item If $\ell \in \Lambda$ and 
    $\proj_\ell(f) = 0$, 
    then $\proj_\ell(\delta(f)) = 0$.
    \label{cond:2}
\end{enumerate}
\end{thm}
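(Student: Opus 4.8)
The plan is to prove both implications of Theorem~\ref{thm:firstCharacterization}, with the forward direction being essentially a repackaging of the two preceding lemmas and the reverse direction requiring an induction on $i$ that constructs an operator on $R$ from the compatible data on the retracts.

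\textbf{Forward direction.} Suppose $\delta \in \cD^i(R)$. Then condition~\eqref{cond:1} is exactly the content of Lemma~\ref{lemma:partialOp1}, and condition~\eqref{cond:2} is exactly the content of Lemma~\ref{lemma:partialOp2}. So this direction is immediate.

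\textbf{Reverse direction.} This is where the real work lies, and I would proceed by induction on $i$. For the base case $i = 0$: assuming $\delta$ is $\kk$-linear with each $\delta_\ell = \proj_\ell \circ \delta \circ \inc_\ell \in \cD^0(S_\ell)$ (so $\delta_\ell$ is multiplication by some $s_\ell \in S_\ell$) and that $\delta$ preserves each $P_\ell = \ker\proj_\ell$, I want to show $\delta$ is multiplication by a single element $r \in R$. The natural candidate is to use condition~\eqref{cond:2} to show that $\delta$ descends to each quotient $R/P_\ell \cong S_\ell$, where it acts as the $S_\ell$-linear map $\delta_\ell$; since $\phi\colon R \hookrightarrow \bigoplus_\ell S_\ell$ is injective and $\phi(\delta(f)) = (\delta_\ell(\proj_\ell(f)))_\ell = (s_\ell \proj_\ell(f))_\ell$, and also $\phi(\delta(1)) = (s_\ell)_\ell$, one gets $\phi(\delta(f)) = (s_\ell \proj_\ell(f))_\ell = \phi(\delta(1)) \cdot \phi(f) = \phi(\delta(1) \cdot f)$, hence $\delta(f) = \delta(1)\cdot f$ by injectivity of $\phi$. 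For the inductive step, assume the statement for $i-1$ and let $\delta$ satisfy \eqref{cond:1} and \eqref{cond:2} at level $i$. I must check $[\delta, r] \in \cD^{i-1}(R)$ for every $r \in R$. The idea is to verify that $[\delta,r]$ satisfies the two conditions at level $i-1$: for \eqref{cond:1}, the commutator computation from the proof of Lemma~\ref{lemma:partialOp1} gives $[\delta,r]_\ell = [\delta_\ell, \proj_\ell(r)]$ (using that $\proj_\ell$ is a ring map and the identity $\pi_\ell \circ g = \pi_\ell(g)\pi_\ell$), which lies in $\cD^{i-1}(S_\ell)$ since $\delta_\ell \in \cD^i(S_\ell)$; for \eqref{cond:2}, if $\proj_\ell(f) = 0$ then $\proj_\ell(rf) = 0$, so $\proj_\ell(\delta(rf)) = 0$ and $\proj_\ell(\delta(f)) = 0$ by hypothesis on $\delta$, whence $\proj_\ell([\delta,r](f)) = \proj_\ell(\delta(rf)) - \proj_\ell(r)\proj_\ell(\delta(f)) = 0$. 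Then the inductive hypothesis applies to $[\delta,r]$, giving $[\delta,r]\in\cD^{i-1}(R)$, so $\delta \in \cD^i(R)$.

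\textbf{Main obstacle.} The subtle point is the base case, specifically checking that a $\kk$-linear map $\delta$ that fixes each $P_\ell$ and whose induced maps $\delta_\ell$ are $S_\ell$-linear must itself be $R$-linear — this is where both conditions are genuinely used and where one must be careful that $\delta$ actually \emph{descends} to the quotients (condition~\eqref{cond:2} guarantees $\delta(P_\ell) \subseteq P_\ell$, so $\proj_\ell \circ \delta$ factors through $\proj_\ell$, and the factored map is precisely $\delta_\ell \circ \proj_\ell$). The inductive step is then largely formal, reusing the commutator identity already established in the proof of Lemma~\ref{lemma:partialOp1}; the only thing to be careful about is that condition~\eqref{cond:2} for $[\delta,r]$ at level $i-1$ follows from condition~\eqref{cond:2} for $\delta$ at level $i$, which is a short direct computation as indicated above.
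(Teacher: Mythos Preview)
Your proposal is correct and follows essentially the same route as the paper's proof: forward direction via the two lemmas, reverse direction by induction on $i$, with the base case using injectivity of $\phi$ to recognize $\delta$ as multiplication by $\delta(1)$, and the inductive step verifying that $[\delta,r]$ satisfies conditions~\eqref{cond:1} and~\eqref{cond:2} at level $i-1$.

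One point deserves care. In the inductive step you assert $[\delta,r]_\ell = [\delta_\ell,\proj_\ell(r)]$ and justify it by ``the commutator computation from the proof of Lemma~\ref{lemma:partialOp1}.'' But that lemma only establishes $[\delta_\ell,f] = [\delta,\inc_\ell(f)]_\ell$ for $f\in S_\ell$, i.e., the identity for $r$ \emph{in the image of} $\inc_\ell$. For arbitrary $r\in R$ you must also invoke condition~\eqref{cond:2}: writing $r_\ell = \inc_\ell\proj_\ell(r)$, one has $\proj_\ell\big((r-r_\ell)\inc_\ell(s)\big)=0$ for all $s\in S_\ell$, so condition~\eqref{cond:2} gives $\proj_\ell\circ\delta\circ r\circ\inc_\ell = \proj_\ell\circ\delta\circ r_\ell\circ\inc_\ell$, after which the Lemma~\ref{lemma:partialOp1} computation applies. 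The paper isolates this step explicitly (their equation $\proj_\ell(\delta(f))=\proj_\ell(\delta(f_\ell))$) before running the commutator expansion. Your base-case argument already uses exactly this descent (you say ``the factored map is precisely $\delta_\ell$''), so the fix is just to note that the same observation is needed again in the inductive step---it is not purely formal from Lemma~\ref{lemma:partialOp1}.
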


\begin{proof}
If $\delta \in \cD^i(R)$, then $\delta$ satisfies Conditions~\eqref{cond:1} and~\eqref{cond:2} by Lemmas~\ref{lemma:partialOp1} and~\ref{lemma:partialOp2}.

Now assume that $\delta$ is $\kk$-linear and satisfies Conditions~\eqref{cond:1} and~\eqref{cond:2}.
Fix $f\in R$. For each $\ell \in \Lambda$, set $f_\ell = \iota_\ell(\pi_\ell(f))$. Then $\pi_\ell(f) = \pi_\ell(f_\ell)$ for all $\ell \in \Lambda$, and so $\pi_\ell(f-f_\ell)=0$ for all $\ell \in \Lambda$.  By 
Condition~\eqref{cond:2}, we have 
$\pi_\ell(\delta(f-f_\ell))=0$.  Hence, 
\begin{equation}\label{eq:cond2observation}
\pi_\ell(\delta(f))=\pi_\ell(\delta(f_\ell)) \quad \text{for each } \ell \in \Lambda.
\end{equation}

We now show that $\delta \in \cD^i(R)$ by induction on $i$.
Set $r=\delta(1)$, and assume that, for $\ell \in \Lambda$, $\delta_\ell \in \cD^0(S_\ell)$ by Condition~\eqref{cond:1}.  
Note that $\delta_\ell = \pi_\ell \circ \delta \circ \iota_\ell$.
As an $S_\ell$-module homomorphism on $D^0(S_\ell)$, $\delta_\ell$ is determined by the image of the identity element $1_{S_\ell}$ 
in $S_\ell$ under $\delta_\ell$. 
That is, 
for any $g \in S_\ell$, 
\[
\delta_\ell(g) = \delta_\ell(1_{S_\ell} \cdot g) = \delta_\ell(1_{S_\ell}) g.
\]
Applying (\ref{eq:cond2observation}) to $f=1$ in $R$, we have
$\pi_\ell(\delta(1_{\ell})) 
= \pi_\ell(\delta(1)) 
= \pi_\ell(r)$. 
On the other hand,
\[ 
\pi_\ell(\delta(1_{\ell})) 
= (\pi_\ell \circ \delta)( \iota_\ell ( \pi_\ell ( 1)))
= (\pi_\ell \circ \delta \circ \iota_\ell) (\pi_\ell(1)) 
= \delta_\ell( \pi_\ell(1) ) 
= \delta_\ell( 1_{S_\ell} ). 
\]
It follows that $\delta_\ell$ is given by multiplication by $\proj_\ell(r)$ for each $\ell \in \Lambda$. 
Next note that
\[ 
\pi_\ell(\delta(f_\ell)) 
= \pi_\ell \circ \delta \circ \iota_\ell(\pi_\ell(f)) 
= \delta_\ell (\pi_\ell (f)) 
= \pi_\ell(r) \pi_\ell(f) 
= \pi_\ell(rf).
\]

Combining these two observations, we have,
by definition of $\phi$,
\[
\phi(\delta(f)) = (\pi_\ell (\delta (f)))_{\ell \in \Lambda}
= (\pi_\ell(rf))_{\ell \in \Lambda}=\phi(rf).
\]  
Since $\phi$ is injective, $\delta(f)=rf$, which implies that $\delta \in \cD^0(R)$.

Assume now that the result is true for operators of order $i-1$, and let $\delta$ satisfy the required conditions for order $i$: 
\eqref{cond:1} $\delta_\ell \in D^i(S_\ell)$ for all $\ell \in \Lambda$ and \eqref{cond:2} if $\ell \in \Lambda$ and $\pi_\ell(g)=0$ then $\pi_\ell(\delta(g))=0$.  

 We need to show that $[\delta,f] \in D^{i-1}(R)$ for $f \in R$.  By induction, it suffices to show that Conditions \eqref{cond:1} and \eqref{cond:2} hold for $[\delta, f]$.  To address Condition \eqref{cond:1}, fix $\ell \in \Lambda$. Since $[\delta,f]_\ell=\pi_\ell \circ [\delta,f] \circ \iota_\ell$, we will show that $\pi_\ell \circ [\delta,f] \circ \iota_\ell \in D^{i-1}(S_\ell)$ for all $\ell \in \Lambda$.
Let us now expand:
\begin{align*}
\pi_\ell \circ [\delta,f] \circ \iota_\ell &= \pi_\ell\circ (\delta \circ f-f \delta)\circ \iota_\ell\\
&= \pi_\ell\circ \delta \circ f \circ \iota_\ell-\pi_\ell(f \delta)\circ \iota_\ell\\
&= \pi_\ell\circ \delta \circ f_\ell \circ \iota_\ell-\pi_\ell(f) \pi_\ell \circ\delta\circ \iota_\ell  \quad\text{ by Equation~\eqref{eq:cond2observation} }\\
&=\pi_\ell\circ \delta \circ \iota_\ell(\pi_\ell(f)) \circ \iota_\ell-\pi_\ell(f) \pi_\ell \circ\delta\circ \iota_\ell \quad \text{by definition of } f_\ell \\
&=\pi_\ell\circ \delta \circ \iota_\ell \circ \pi_\ell(f) -\pi_\ell(f) \pi_\ell \circ\delta\circ \iota_\ell \quad \text{as  } \iota_\ell \text{ is a homomorphism} \\
&=\delta_\ell \circ \pi_\ell(f)-\pi_\ell(f) \delta_\ell \quad \text{by definition of } \delta_\ell \\
&=[\delta_\ell,\pi_\ell(f)].
\end{align*}
Since $\delta_ \ell \in D^i(S_\ell)$ by assumption, $[\delta_\ell,\pi_\ell(f)] \in D^{i-1}(S_\ell)$ by the definition of an order $i$ differential operator.

To address Condition \eqref{cond:2}, assume $\ell \in \Lambda$ and $\pi_\ell(g) = 0$ for some $g \in R$.  Because $\pi_\ell$ is a homomorphism and $\delta$ satisfies Condition \eqref{cond:2}, $\pi_\ell(fg) = \pi_\ell(f)\pi_\ell(g) = 0$, and so $\pi_\ell(\delta(fg)) = 0$.  Then \begin{align*} 
\pi_\ell \circ [\delta, f](g) = \pi_\ell(\delta(fg))-\pi_\ell(f  \delta(g)) = 0 - \pi_\ell(f)\pi_\ell(\delta(g))  = 0.
\end{align*}
Hence $[\delta,f]$ satisfied Condition \eqref{cond:2} as well as Condition \eqref{cond:1}, and so, by induction, $[\delta,f] \in D^{i-1}(R)$, and then $\delta \in D^i(R)$.
\end{proof}

To state our second characterization of $D(R)$, if $\lambda \subset \Lambda$, we need some additional notation. Set $S_\lambda= \bigcap\limits_{\ell \in \lambda} S_\ell$.  Since $S_\ell$ is an algebra retract of $R$ for all $\ell \in \lambda$, $S_\lambda$ is also an algebra retract of $R$ in a natural way (and indeed $S_\lambda$ is also an algebra retract of $S_\ell$ for all $\ell \in \lambda$).  We define $\inc_{\lambda,\ell}$ to be the natural inclusion that identifies $S_\lambda$ as a subring of $S_\ell$, and $\proj_{\ell,\lambda}$ is the natural projection from $S_\ell$ to $S_\lambda$, where the latter is considered as a quotient of the former.   

\begin{thm}  
\label{thm:secondCharacterization}
Assume that the $\kk$-algebra $R$ is realized by the retracts $\{S_\ell \mid \ell \in \Lambda\}$.
The map
\begin{equation*}
\cD(R) \to \bigoplus_{\ell\in \Lambda} \cD(S_\ell)
\quad \text{given by}\quad
\delta \mapsto (\delta_\ell = \proj_\ell \circ \delta \circ \inc_\ell \mid \ell \in \Lambda)
\end{equation*}
from~ \eqref{eqn:intoDiffRetracts}
is an injective ring homomorphism. 
A tuple $(\rho_\ell \mid \ell \in \Lambda) \in \bigoplus_{\ell\in \Lambda} \cD(S_\ell)$ is in the image of~\eqref{eqn:intoDiffRetracts} 
if and only if it satisfies the following two conditions:
\begin{enumerate}[(a)]
    \item if $\lambda \subset  \Lambda$ and $j,k \in \lambda$,  then $ \proj_{j,\lambda} \circ \rho_j \circ \inc_{\lambda,j} = \proj_{k,\lambda} \circ \rho_k \circ \inc_{\lambda,k}$; 
    \label{cond:a}
    \item 
    If $\lambda=\{j,k\}$, then $\proj_{j, \lambda} (\rho_j (f)) = 0$ if 
    $\proj_k(f) = 0$.
    \label{cond:b}
\end{enumerate}
\end{thm}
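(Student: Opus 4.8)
The plan is to leverage Theorem~\ref{thm:firstCharacterization} heavily, translating its Conditions~\eqref{cond:1} and~\eqref{cond:2} on an operator $\delta$ into the stated Conditions~\eqref{cond:a} and~\eqref{cond:b} on the tuple $(\delta_\ell \mid \ell \in \Lambda)$. First I would dispatch the easy structural claims: injectivity of the map~\eqref{eqn:intoDiffRetracts} follows because $\delta \mapsto (\delta_\ell)$ determines $\phi(\delta(f)) = (\pi_\ell(\delta(f)))_\ell = (\delta_\ell(\pi_\ell(f)))_\ell$ for every $f$ (using Equation~\eqref{eq:cond2observation}, which holds because any $\delta \in \cD(R)$ satisfies Condition~\eqref{cond:2} by Lemma~\ref{lemma:partialOp2}), and $\phi$ is injective; well-definedness into $\bigoplus \cD(S_\ell)$ is Lemma~\ref{lemma:partialOp1}. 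That the map is a ring homomorphism is a direct computation: $(\delta \circ \varepsilon)_\ell = \pi_\ell \circ \delta \circ \varepsilon \circ \iota_\ell$, and inserting $\iota_\ell \circ \pi_\ell$ in the middle is harmless on the relevant arguments because $\pi_\ell(\varepsilon(\iota_\ell(g)) - \iota_\ell \pi_\ell \varepsilon \iota_\ell(g)) = 0$ again by Condition~\eqref{cond:2}; additivity and $\kk$-linearity are immediate.

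The substantive content is the characterization of the image. For the forward direction, suppose $(\rho_\ell) = (\delta_\ell)$ for some $\delta \in \cD(R)$. For Condition~\eqref{cond:a}: given $\lambda \subset \Lambda$ and $j,k \in \lambda$, I would observe that $S_\lambda$ sits inside both $S_j$ and $S_k$, and that the retraction $R \twoheadrightarrow S_\lambda$ factors as $\pi_j$ followed by $\pi_{j,\lambda}$ (and likewise through $k$), so $\proj_{j,\lambda} \circ \delta_j \circ \inc_{\lambda,j}$ and $\proj_{k,\lambda} \circ \delta_k \circ \inc_{\lambda,k}$ are both equal to the operator $\delta_\lambda := \pi_\lambda \circ \delta \circ \iota_\lambda$ on $S_\lambda$—this is just associativity of composition once one checks $\pi_j \circ \iota_{\lambda,j}$ restricted appropriately agrees with the inclusion $S_\lambda \hookrightarrow R$ up to the identification, which follows from $\pi_j \circ \iota_j = \id$. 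For Condition~\eqref{cond:b}: take $\lambda = \{j,k\}$ and $f \in R$ with $\pi_k(f) = 0$; then $\pi_\lambda(f) = \pi_{k,\lambda}(\pi_k(f)) = 0$, so by Lemma~\ref{lemma:partialOp2} applied to the retract $S_\lambda$ we get $\pi_\lambda(\delta(f)) = 0$, i.e. $\proj_{j,\lambda}(\delta_j(\pi_j(f))) = 0$; reconciling this with the statement (where $\rho_j$ is applied to $f$ rather than $\pi_j(f)$) uses the convention $\rho_j = \delta_j$ together with Equation~\eqref{eq:cond2observation}-type reasoning so that $\delta_j(\pi_j(f)) = \pi_j(\delta(f))$ and then $\proj_{j,\lambda}$ of it vanishes.

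For the converse—the heart of the proof—suppose $(\rho_\ell)$ satisfies~\eqref{cond:a} and~\eqref{cond:b}. I would build a candidate $\delta \colon R \to R$ by the formula forced by injectivity of $\phi$: declare $\delta(f)$ to be the unique preimage under $\phi$ of $(\rho_\ell(\pi_\ell(f)) \mid \ell \in \Lambda)$, provided this tuple actually lies in $\phi(R)$. Showing the tuple is in the image of $\phi$ is where Condition~\eqref{cond:a} (with $|\lambda| = 2$, the compatibility on pairwise intersections $S_{\{j,k\}}$) does the work: by the description of $R$ as the fiber product / subring of $\bigoplus S_\ell$ cut out by agreement on overlaps—more precisely, since $\bigcap P_\ell = 0$ and the images $\pi_j(f), \pi_k(f)$ must agree in $S_{\{j,k\}} = S_j \cap S_k$—one checks that $\rho_j(\pi_j(f))$ and $\rho_k(\pi_k(f))$ have the same image in $S_{\{j,k\}}$, which is exactly~\eqref{cond:a}. (Here I would need the lemma that $\phi(R)$ is precisely the set of tuples agreeing on all pairwise $S_{\{j,k\}}$; if that is not already available I would prove it from $\bigcap P_\ell = 0$ plus the retract structure, or restrict to the case at hand.) Once $\delta$ is well-defined and $\kk$-linear, I verify Conditions~\eqref{cond:1} and~\eqref{cond:2} of Theorem~\ref{thm:firstCharacterization}: Condition~\eqref{cond:2} is immediate from the defining formula since $\pi_\ell(f) = 0$ forces the $\ell$-th coordinate $\rho_\ell(\pi_\ell(f)) = 0$; Condition~\eqref{cond:1} requires $\delta_\ell \in \cD^i(S_\ell)$, and I would show $\delta_\ell = \rho_\ell$ directly from the construction ($\pi_\ell(\delta(\iota_\ell(g))) = \rho_\ell(\pi_\ell \iota_\ell(g)) = \rho_\ell(g)$), so $\delta_\ell = \rho_\ell \in \cD(S_\ell)$ by hypothesis—though matching orders ($\delta \in \cD^i(R)$ for the right $i$) needs a touch of care, handled by taking $i = \max_\ell \operatorname{ord}(\rho_\ell)$ and invoking Theorem~\ref{thm:firstCharacterization}. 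Then Theorem~\ref{thm:firstCharacterization} gives $\delta \in \cD(R)$, and by construction its image is $(\rho_\ell)$. The main obstacle I anticipate is the clean bookkeeping of the identifications $S_\lambda \hookrightarrow S_\ell \hookrightarrow R$ versus $R \twoheadrightarrow S_\ell \twoheadrightarrow S_\lambda$ and, in the converse, establishing that $\phi(R)$ equals the tuples compatible on pairwise intersections—this compatibility-on-overlaps description of $R$ may require its own short argument and is the linchpin that makes Condition~\eqref{cond:a} suffice (rather than needing all of $\Lambda$'s subsets in an essential way).
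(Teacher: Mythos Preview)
Your treatment of injectivity, the ring-homomorphism property, and the forward direction is fine and close in spirit to the paper's. The substantive divergence is in the converse, and there your approach has a genuine gap.

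You propose to define $\delta(f)$ as the $\phi$-preimage of the tuple $(\rho_\ell(\pi_\ell(f)))_\ell$, which requires showing that this tuple lies in $\phi(R)$. You correctly flag this as the linchpin and suggest it would follow from a fiber-product description of $\phi(R)$ via pairwise compatibility on the $S_{\{j,k\}}$. But you do not prove this description, and nothing in Definition~\ref{def:rcr} guarantees it: the axioms for a \rcr\ assert only that $\bigcap_\ell P_\ell = 0$ with irredundancy, which does not obviously force $\phi(R)$ to coincide with the equalizer of the pairwise restriction maps. The step you describe as possibly needing ``its own short argument'' is in fact the crux, and your suspicion that pairwise compatibility (i.e., Condition~\eqref{cond:a} for $|\lambda|=2$ only) suffices is unjustified in this generality.

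The paper avoids this issue entirely by constructing the preimage explicitly via inclusion--exclusion. For each nonempty $\lambda \subset \Lambda$, Condition~\eqref{cond:a} makes $\rho_\lambda := \pi_{\ell,\lambda} \circ \rho_\ell \circ \iota_{\lambda,\ell}$ well-defined (independent of the choice of $\ell \in \lambda$); Condition~\eqref{cond:b} and an argument parallel to Theorem~\ref{thm:firstCharacterization} show that the lift $\overline{\rho_\lambda} := \iota_\lambda \circ \rho_\lambda \circ \pi_\lambda$ lies in $\cD(R)$; and then one sets
\[
\rho \;=\; \sum_{\varnothing \neq \lambda \subset \Lambda} (-1)^{|\lambda|-1}\, \overline{\rho_\lambda}.
\]
This is an explicit element of $\cD(R)$ whose image under~\eqref{eqn:intoDiffRetracts} is $(\rho_\ell)$, giving the inverse (and hence injectivity and the image description) in one stroke. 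Note that this genuinely uses Condition~\eqref{cond:a} for \emph{all} subsets $\lambda$, not just pairs---contrary to your closing intuition---and it never needs to characterize $\phi(R)$ inside $\bigoplus_\ell S_\ell$.
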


\begin{proof}
Since \eqref{eqn:intoDiffRetracts} is given by composition and direct sums of ring homomorphisms, it is itself a ring homomorphism.  

First, for $\delta \in \cD(R)$, its image satisfies Conditions~\eqref{cond:a} (by construction) and~\eqref{cond:b} by Condition~\eqref{cond:2} of Theorem~\ref{thm:firstCharacterization}.  
To prove injectivity, as well as verify the description of the image of~\eqref{eqn:intoDiffRetracts}, we construct an inverse.

For $\lambda \subset \Lambda$ and $\rho \in \cD(S_\lambda)$, set $\overline{\rho} = \inc_\lambda \circ \rho \circ \proj_\lambda$. 
When $\rho$ satisfies Condition~\eqref{cond:b}, then, by an argument similar to that at the end of the proof of Theorem~\ref{thm:firstCharacterization}, $\overline{\rho} \in \cD(R)$. 

If $(\rho_\ell \mid \tau \in \Lambda) \in \bigoplus_{\ell\in \Lambda} \cD(S_\ell)$ satisfies Condition~\eqref{cond:a} 
and $\varnothing \neq \lambda \subset \Lambda$, choose $\ell \in \lambda$, and set
$\rho_{\lambda} =  \proj_{\ell,\lambda} \circ \rho_\ell \circ \inc_{\lambda, \ell}$. By Condition~\eqref{cond:a}, $\rho_\lambda$ is independent of the choice of $\lambda \in \Lambda$.
Now let
\[
\rho =  \sum_{\varnothing \neq \lambda \subset \Lambda} (-1)^{|\lambda|-1} \overline{\rho_\lambda}.
\]
We claim that if $(\rho_\ell \mid \ell \in \Lambda) \in \bigoplus_{\ell\in \Lambda} \cD(S_\ell)$ satisfies Conditions~\eqref{cond:a} and~\eqref{cond:b}, then $\rho \in \cD(R)$, and the image of $\rho$ under the map~\eqref{eqn:intoDiffRetracts} is $(\rho_\ell \mid \ell \in \Lambda)$.
This gives the desired inverse and finishes the proof.
\end{proof}

\section{Applications}
\label{sec:apps}
In this section, we apply the results of Section~\ref{sec:DopsalgebraRetracts} to compute rings of differential operators for examples of \rcrs\ discussed in Section~\ref{sec:algebraRetracts}. 
Throughout this section, we assume that the characteristic of $\kk$ is zero. 

\subsection{Differential operators of Stanley--Reisner rings}
\label{subs:DopsSRrings}
Traves \cite{TrDM} gave a nice classification of the ring of differential operators for Stanley--Reisner ring. 
In particular, he proved that $D(\kk[\Delta])$ is generated as a $\kk$-algebra by
\[
\{ t^{\bba} \partial^{\boldb}  \mid t^{\bba} \in P \text{ or } t^{\boldb} \notin P \text{ for each minimal prime } P \text{ of } R\}.
\]
where $t^{\bba} \partial^{\boldb} = t_1^{a_1} \dots t_d^{a_d} \partial_1^{b_1} \dots \partial_d^{b_d}$ and $\partial_i = \frac{\partial}{\partial t_i}$.

One can see directly that this description matches our description of the ring of differential operators in terms of algebra retracts.  In particular, for a Stanley--Reisner ring $\kk[\Delta]$, recall that $\kk[\Delta]$ is realized by the retracts $\{\kk[F_\ell] \mid \ell \in \Lambda\}$, where $\{F_\ell \mid \ell \in \Lambda\}$ are the facets of $\Delta$.  Further, we know that the minimal primes of $\kk[\Delta]$ are exactly those corresponding to the facets, namely $P_{F_\ell} = \<t^{\bba} \mid \bba \in\NN^d, \supp(\bba)\notin F_\ell\>$ (see for example \cite{SRsurvey}). 

For any $\kk$-linear map $\delta: \kk[\Delta] \to \kk[\Delta]$, Theorem \ref{thm:firstCharacterization} tells us that $\delta \in D^i(\kk[\Delta])$ if and only if 
\begin{enumerate}
    \item $\delta_\ell \in D^i(\kk[F_\ell])$ for all $\ell \in \Lambda$ \label{cond:1-SR} and
    \item $\pi_\ell(\delta(f)) = 0$ for all $\ell \in \Lambda$ and $f \notin \kk[F_\ell]$.\label{cond:2-SR}
\end{enumerate}   
Now for any facet $F_\ell$ of $\Delta$, notice that $\kk[F_\ell] \cong \kk[t_i \mid i \in F_\ell]$, so that $D(\kk[F_\ell])$ is the standard Weyl algebra on the variables $\{t_i \mid i \in F_\ell\}$.  By Condition~\eqref{cond:1-SR}, $D(\kk[\Delta])$ must be generated by elements of the form $x^{\bba}\partial^{\boldb}$.

Further, we have that $\pi_\ell(t^{\bba}\partial^{\boldb}(f)) = 0$ for all $f \notin \kk[F_\ell]$ if and only if $t^{\bba} \notin \kk[F_\ell]$ or $\partial^{\boldb}(f) = 0$ for all $f \notin \kk[F_\ell]$.  This happens if and only if $\supp(\bba) \notin F_\ell$ or $\supp(\boldb) \in F_\ell$.  In other words, we have Condition~\eqref{cond:2-SR} if and only if $t^{\bba} \in P_{F_\ell}$ or $t^{\boldb} \notin P_{F_\ell}$ for every $\ell \in \Lambda$.

\subsection{Differential operators of toric face rings}
\label{subs:Dopstfr}
The algebra retracts of toric face rings that we will consider are affine semigroup rings. 
In~\cite{Sai-Tr-DASR}, Saito and Traves described the ring of differential operators for an affine semigroup ring $\kk[\NN A]$ over the complex numbers, when viewed as a subring of the ring of differential operators of the Laurent polynomials, 
i.e., 
\[
D(\kk[\ZZ^d])=\kk[t_1^{\pm 1}, \ldots, t_d^{\pm 1}]\< \partial_1, \ldots, \partial_d\>,
\]
where $\partial_i$ denotes the differential operator $\frac{\partial}{\partial t_i}$.  
Setting $\theta_j=t_j\partial_j$ for $1 \leq j \leq d$ and noting that $\theta_i\theta_j=\theta_j\theta_i$ for all $i$ and $j$, the ring $\kk[\theta]=\kk[\theta_1, \theta_2 \ldots, \theta_d]$ is a polynomial ring. Set 
\[
\Omega({\bbm}) 
:= \{{\bba} \in \NN A 
\mid {\bba} +{\bbm} \notin \NN A\} 
= \NN A \setminus (-{\bbm}+ \NN A). 
\]

The \emph{idealizer of $\Omega ({\bbm})$} is defined to be the $\kk[\theta]$-ideal
\[ 
\II( \Omega({\bbm}) ) 
:= \<f (\theta) \in \kk[\theta] \mid f(\bba)=0 \text{ for all } \bba\in \Omega({\bbm})\>, 
\]
with the $\theta_i$ of degree $\boldzero$. 
In fact, $\II( \Omega({\bbm}))$ consists of $f(\theta)$ such that $t^{\bbm} f(\theta) \in D(\kk[\NN A])$. This is a consequence of
Saito and Traves \cite[Theorem 2.1] {Sai-Tr-DASR}, where they show that 
\[
\cD(\kk[\NN A]) 
= \bigoplus\limits_{ {\bbm} \in \ZZ^d} t^{\bbm} \cdot \II(\Omega({\bbm})).
\]

To compute $\II(\Omega({\bbm}))$ for a normal semigroup ring, consider a facet $\sigma$ of $A$, recalling that by this we mean a submonoid of $\NN A$ whose linear span has dimension $d-1$.
The {\em primitive integral support function} (or simply {\em support function}) $F_{\sigma}$ is the unique linear form on $\RR^d$ such that 
\[
(1)\ F_{\sigma}(\RR_{\geq 0}A) \geq 0, \qquad (2)\ F_{\sigma}(\sigma)=0, \qquad \text{ and} \qquad (3)\ F_{\sigma}(\ZZ^d)=\ZZ.
\]

Saito and Traves used the $F_{\sigma}({\bbm})$ to determine the precise form of $\II(\Omega({\bbm}))$ for normal $\kk[\NN A]$.
The ideas of the computation in \cite{Sai-Tr-DASR} can be traced back to \cite{Jones} and \cite{Mussontori}.
Set 
\begin{equation}
\label{eqn:G}
G_{{\bbm}}(\theta):= 
\prod\limits_{F_{\sigma}({\bbm})<0}\prod\limits_{i=0}^{-F_{\sigma}({\bbm})-1}(F_{\sigma}(\theta)-i).
\end{equation}

\begin{thm}\cite[Theorem 3.2.2]{Sai-Tr-DASR} 
\label{thm:DRd}
Let $R$ be a normal affine semigroup ring of dimension $d$. 
Let $F_{\sigma_1}, \dots, F_{\sigma_{r}}$ be the support functions of the facets ${\sigma_i}$ of the semigroup defining $R$. 
Let ${\bbm} $ be a multidegree in $\ZZ^d$. Then
\[ 
\cD(R) _{\bbm} = 
t^{\bbm} \cdot \left\< G_{\bbm}(\theta) \right\>.
\]
\end{thm}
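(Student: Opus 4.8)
The plan is to combine the Saito--Traves decomposition $\cD(R) = \bigoplus_{\bbm \in \ZZ^d} t^{\bbm} \cdot \II(\Omega(\bbm))$ with an explicit identification of the idealizer ideal $\II(\Omega(\bbm))$ for normal $\kk[\NN A]$. Since the decomposition has already been recorded, the entire content of the statement is the equality of $\kk[\theta]$-ideals
\[
\II(\Omega(\bbm)) = \left\< G_{\bbm}(\theta) \right\>,
\]
where $G_{\bbm}(\theta) = \prod_{F_{\sigma}(\bbm)<0}\prod_{i=0}^{-F_{\sigma}(\bbm)-1}(F_{\sigma}(\theta)-i)$.

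First I would describe the set $\Omega(\bbm) = \NN A \setminus(-\bbm + \NN A)$ combinatorially using normality: for $\bba \in \NN A = \RR_{\geq 0}A \cap \ZZ^d$, we have $\bba + \bbm \in \NN A$ if and only if $F_{\sigma_j}(\bba + \bbm) \geq 0$ for every facet $\sigma_j$, i.e.\ $F_{\sigma_j}(\bba) \geq -F_{\sigma_j}(\bbm)$ for all $j$. Hence $\bba \in \Omega(\bbm)$ precisely when $F_{\sigma_j}(\bba) < -F_{\sigma_j}(\bbm)$ for some $j$ with $F_{\sigma_j}(\bbm) < 0$; combined with $F_{\sigma_j}(\bba) \geq 0$ and integrality, this says $F_{\sigma_j}(\bba) \in \{0, 1, \dots, -F_{\sigma_j}(\bbm) - 1\}$ for that $j$. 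So $\Omega(\bbm)$ is the union, over facets $\sigma_j$ with $F_{\sigma_j}(\bbm) < 0$, of the ``slabs'' $\{\bba \in \NN A \mid 0 \leq F_{\sigma_j}(\bba) \leq -F_{\sigma_j}(\bbm) - 1\}$. This immediately gives the containment $G_{\bbm}(\theta) \in \II(\Omega(\bbm))$: evaluating $G_{\bbm}$ at any $\bba \in \Omega(\bbm)$, one of the facet factors $\prod_{i=0}^{-F_{\sigma_j}(\bbm)-1}(F_{\sigma_j}(\theta) - i)$ vanishes because $F_{\sigma_j}(\bba)$ equals one of the integers $0, \dots, -F_{\sigma_j}(\bbm)-1$. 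Since $\II(\Omega(\bbm))$ is an ideal, $\langle G_{\bbm}(\theta)\rangle \subseteq \II(\Omega(\bbm))$.

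The reverse containment is the main obstacle. I would argue that if $f(\theta) \in \kk[\theta]$ vanishes on all of $\Omega(\bbm)$, then $G_{\bbm}(\theta)$ divides $f(\theta)$. The key point is a Zariski-density statement: the lattice points $\bba \in \NN A$ lying in a single slab $\{F_{\sigma_j}(\bba) = c\}$ for a fixed value $c \in \{0, \dots, -F_{\sigma_j}(\bbm)-1\}$ are Zariski dense in the hyperplane $\{F_{\sigma_j}(x) = c\} \subseteq \RR^d$ (this uses that $\ZZ A = \ZZ^d$, that the cone is full-dimensional, and that $c \geq 0$ means the slab meets the interior of the cone, so it contains a full-rank sublattice of lattice points in that affine hyperplane). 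Therefore any polynomial vanishing on that slab's lattice points must vanish identically on the hyperplane $\{F_{\sigma_j}(x) = c\}$, hence is divisible by the linear form $F_{\sigma_j}(\theta) - c$. Applying this successively for each facet $\sigma_j$ with $F_{\sigma_j}(\bbm) < 0$ and each $c \in \{0, \dots, -F_{\sigma_j}(\bbm)-1\}$, and checking that the linear forms $F_{\sigma_j}(\theta) - c$ occurring (for distinct pairs $(\sigma_j, c)$) are pairwise non-associate so their product divides $f$, yields $G_{\bbm}(\theta) \mid f(\theta)$, as desired. One must be slightly careful that distinct facets can give the same support function only if they coincide, and that for a fixed facet the values $c$ are distinct, so the factors are genuinely coprime in the polynomial ring $\kk[\theta]$.

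Finally, I would assemble the pieces: $\cD(R)_{\bbm} = t^{\bbm} \cdot \II(\Omega(\bbm)) = t^{\bbm} \cdot \langle G_{\bbm}(\theta)\rangle$, which is exactly the claimed formula. I expect the density argument in the reverse containment to be where the real work lies; everything else is bookkeeping with the support functions and the structure of $\Omega(\bbm)$. Since this result is quoted from \cite{Sai-Tr-DASR}, an alternative is simply to cite that source, but the proof sketch above is the natural self-contained route.
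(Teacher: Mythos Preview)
The paper does not prove this statement; it is quoted verbatim from \cite[Theorem~3.2.2]{Sai-Tr-DASR} and invoked as a black box, so there is no in-paper argument to compare against. Your sketch is correct and is essentially the argument given in the cited source: the identification of $\Omega(\bbm)$ as a union of parallel lattice slabs via normality, the immediate containment $\langle G_{\bbm}(\theta)\rangle \subseteq \II(\Omega(\bbm))$, and the reverse containment by Zariski density of the lattice points in each affine hyperplane $\{F_{\sigma_j}=c\}$ together with pairwise coprimality of the linear factors. Your closing remark already anticipates this---citing \cite{Sai-Tr-DASR} is exactly what the paper does.
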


We now provide a graded description of 
$\cD^i(R)$ when $R=\tfr$ is a toric face ring.
Let $\sigma \in \Sigma$, and let $\rho \in \cD^i(\kk[M_\sigma])$. 
Then $\rho$ is a sum of operators of order $i$ each of which is homogeneous with respect to the natural $\ZZ\sigma$-grading (where $\ZZ\sigma$ is the abelian group generated by $M_\sigma$). 
We claim that if $\rho$ satisfies Condition~\eqref{cond:b} of Theorem~\ref{thm:secondCharacterization}, then each homogeneous component does as well.
This holds since applying an operator of multidegree $\boldb\in \ZZ\sigma$ to a monomial $t^{\bba}$ yields a (possibly zero) scalar multiple  of $t^{\bba+\boldb}$. As $\rho$ is a finite sum of operators of multidegrees $\boldb_1, \ldots \boldb_n$, $\rho(t^{\bba})=0$ if and only if the constant multiple of $t^{\bba+\boldb_i}$ is 0 for all $1 \leq i \leq n$. 

The following result then provides the final key to describe differential operators on toric face rings, as we illustrate in examples later.

\begin{prop}
\label{prop:tfr-diffops}
Let $\sigma \in \facets(\fan)$, and let $\rho \in \cD^i(\kk[M_\sigma])$ be homogeneous of degree $\boldb \in \ZZ\sigma$ satisfying Condition~\eqref{cond:b} from Theorem~\ref{thm:secondCharacterization}. Then $\rho = t^{\boldb} q(\theta)$ for some $q \in \kk[\theta]$ such that 
\begin{enumerate}[(i)]
\item $q(\bba) = 0$ if $\bba+\boldb \notin M_\sigma$ when $\bba \in M_{\sigma}$, and \label{cond:ainMsig}
\item For $\tau \in \facets(\fan) \setminus \{\sigma\}$ and $\bba\in M_\sigma\setminus M_{\sigma \cap \tau}$
such that $\bba+\boldb \in M_{\sigma \cap \tau}$, we have
$q(\bba)= 0$. \label{cond:q(a)=0}
\end{enumerate} 
\end{prop}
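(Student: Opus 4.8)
The plan is to first get the shape $\rho = t^{\boldb} q(\theta)$ for free, then translate Condition~\eqref{cond:b} into the two vanishing conditions on $q$. Since $\sigma$ is a facet of $\fan$, $\kk[M_\sigma]$ is a normal affine semigroup ring (it is the semigroup ring associated to a maximal cone of the underlying fan structure, so $M_\sigma = \RR_{\geq 0}M_\sigma \cap \ZZ M_\sigma$ by the monoidal complex axioms restricted to $\sigma$), and so Theorem~\ref{thm:DRd} (equivalently the graded pieces in \cite[Theorem 2.1]{Sai-Tr-DASR}) applies: every homogeneous operator of multidegree $\boldb \in \ZZ\sigma$ on $\kk[M_\sigma]$ is of the form $t^{\boldb} q(\theta)$ with $q \in \kk[\theta]$, and the condition characterizing which such $q$ give an actual differential operator is precisely $q \in \II(\Omega_\sigma(\boldb))$, i.e. $q(\bba) = 0$ for all $\bba \in M_\sigma$ with $\bba + \boldb \notin M_\sigma$. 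That is exactly item~\eqref{cond:ainMsig}. So item~\eqref{cond:ainMsig} is simply the restatement of ``$\rho \in \cD^i(\kk[M_\sigma])$'' in the language of the Saito--Traves description, and contributes nothing new beyond the hypothesis.

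The content is item~\eqref{cond:q(a)=0}, which unpacks Condition~\eqref{cond:b}. Recall Condition~\eqref{cond:b} for $\lambda = \{\sigma, \tau\}$ says: $\proj_{\sigma,\lambda}(\rho(f)) = 0$ whenever $\proj_\tau(f) = 0$. Here $S_\lambda = \kk[M_\sigma] \cap \kk[M_\tau] = \kk[M_{\sigma \cap \tau}]$ (using axiom (ii) of a monoidal complex, $M_{\sigma\cap\tau} = (\sigma \cap \tau) \cap M_\sigma = (\sigma\cap\tau)\cap M_\tau$), and $\proj_{\sigma,\lambda}\colon \kk[M_\sigma] \twoheadrightarrow \kk[M_{\sigma\cap\tau}]$ kills all $t^{\bba}$ with $\bba \in M_\sigma \setminus M_{\sigma\cap\tau}$, while $\proj_\tau\colon \tfr \twoheadrightarrow \kk[M_\tau]$ kills all $t^{\bba}$ with $\bba \notin M_\tau$. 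I will test Condition~\eqref{cond:b} on a monomial $t^{\bba}$ with $\bba \in M_\sigma \setminus M_{\sigma\cap\tau}$: then $\bba \notin M_\tau$ (since $M_\sigma \cap M_\tau = M_{\sigma\cap\tau}$ as submonoids of $\tfr$), so $\proj_\tau(t^{\bba}) = 0$, and Condition~\eqref{cond:b} forces $\proj_{\sigma,\lambda}(\rho(t^{\bba})) = 0$. Now $\rho(t^{\bba}) = t^{\boldb} q(\theta)(t^{\bba}) = q(\bba)\, t^{\bba + \boldb}$ computed inside $\kk[M_\sigma]$ (nonzero only if $\bba + \boldb \in M_\sigma$). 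Applying $\proj_{\sigma,\lambda}$: if $\bba + \boldb \in M_{\sigma\cap\tau}$, then $\proj_{\sigma,\lambda}(t^{\bba+\boldb}) = t^{\bba+\boldb} \neq 0$, so we must have $q(\bba) = 0$. If $\bba + \boldb \notin M_{\sigma\cap\tau}$, then $\proj_{\sigma,\lambda}(t^{\bba+\boldb}) = 0$ automatically and no constraint arises. Thus Condition~\eqref{cond:b} is equivalent, monomial by monomial, to: $q(\bba) = 0$ for all $\tau \in \facets(\fan)\setminus\{\sigma\}$ and all $\bba \in M_\sigma \setminus M_{\sigma\cap\tau}$ with $\bba + \boldb \in M_{\sigma\cap\tau}$ — which is item~\eqref{cond:q(a)=0}. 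The reduction to monomials is legitimate because, as noted just before the proposition, $q(\theta)$ homogeneous of degree $\boldb$ sends $t^{\bba}$ to a scalar multiple of $t^{\bba+\boldb}$, and distinct input monomials go to distinct output monomials, so $\rho$ satisfies Condition~\eqref{cond:b} iff it does so on each monomial.

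The one subtlety I expect to be the main obstacle — really the only place care is needed — is the bookkeeping of the various monoid intersections inside $\tfr$: one must be sure that for $\bba \in M_\sigma$, membership ``$\bba \in M_\tau$'' inside the toric face ring is the same as ``$\bba \in \sigma \cap \tau$'' (hence $\bba \in M_{\sigma\cap\tau}$), so that the three relevant kernels ($P_\tau$ restricted to $\kk[M_\sigma]$, $\ker\proj_{\sigma,\lambda}$, and the ``$\bba+\boldb\notin M_\sigma$'' locus from item~\eqref{cond:ainMsig}) are correctly identified with the combinatorial conditions stated. This follows cleanly from monoidal complex axiom (ii) applied to the face $\sigma\cap\tau \subseteq \sigma$, together with the fact that $|\mc| \cap \sigma = M_\sigma$. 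Once that identification is in place, the proof is the short computation above: write $\rho = t^{\boldb}q(\theta)$, note item~\eqref{cond:ainMsig} is the Saito--Traves condition, and evaluate Condition~\eqref{cond:b} on monomials to extract item~\eqref{cond:q(a)=0}.
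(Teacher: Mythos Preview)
Your proposal is correct and follows exactly the paper's (very terse) argument: item~\eqref{cond:ainMsig} is the Saito--Traves description of $\cD(\kk[M_\sigma])_{\boldb}$, and item~\eqref{cond:q(a)=0} is Condition~\eqref{cond:b} evaluated on monomials. One small correction: the monoidal complex axioms do \emph{not} force $M_\sigma$ to be saturated (only $M_\sigma \subseteq \sigma \cap \ZZ^d$ with $\RR_{\geq 0}M_\sigma = \sigma$), so your appeal to normality and Theorem~\ref{thm:DRd} is unjustified; however, the general result \cite[Theorem~2.1]{Sai-Tr-DASR} that you also cite already gives $\cD(\kk[M_\sigma])_{\boldb} = t^{\boldb}\cdot\II(\Omega(\boldb))$ without any normality hypothesis, so simply drop the normality claim and invoke that directly.
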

\begin{proof}
Condition \eqref{cond:ainMsig} follows from the description of $\cD(\kk[M_\tau])$. Condition \eqref{cond:q(a)=0} follows from the assumption that Condition~\eqref{cond:b} of Theorem ~\ref{thm:secondCharacterization} is satisfied for $\rho$.
\end{proof}

\begin{rmk}
\label{rmk:facetEquations}
The two conditions in the previous result arise from the multigraded nature of our descriptions of rings of differential operators.
For a given multidegree $\boldb$, operators of this degree are spanned by operators of the form $\delta = t^{\boldb}q(\theta)$, where $q$ is a polynomial. 
Then $\delta(t^{\bba}) = q(\bba) t^{\bba+\boldb}$. If $\bba$ belongs to our semigroup but $\bba+\boldb$ does not, then we must have $q(\bba)=0$ if $\delta$ is to be a differential operator on the semigroup ring. This is the first condition in Proposition~\ref{prop:tfr-diffops}.

Similarly, if we are working with a semigroup coming from a monoidal complex, it may happen that $\bba$ belongs to a semigroup, and $\bba+\boldb$ belongs to both the semigroup and some other semigroup in the complex.  In this case the projection of $\delta(t^{\bba})=q(\bba)t^{\bba+\boldb}$ onto this second semigroup will not vanish unless $q(\bba)=0$.
This explains the second condition in Proposition~\ref{prop:tfr-diffops}. 

For each $\boldb$, the set of $\bba$ at which the polynomials $q$ are forced to vanish lie on a finite collection of translates of the linear spans of the faces of the maximal cones in the fan $\fan$. This explains why their vanishing ideal is generated by products of shifts of linear forms corresponding to supporting hyperplanes of those faces. 
\end{rmk}

We now include concrete examples of differential operators in the toric face ring setting.

\begin{example}
\label{ex:gluedrationalnormalcurves}
Let 
$
A=\left[\begin{smallmatrix}
1 & 1 & 1 & 1 & 1\\
0 & 1 & 2 & 0 & 0\\
0 & 0 & 0 & 1 & 2\\
\end{smallmatrix}\right]$,
$B= \left[\begin{smallmatrix}
1 & 1 & 1 \\
0 & 1 & 2 \\
0 & 0 & 0 \\
\end{smallmatrix}\right]$
and
$C=\left[\begin{smallmatrix}
1  & 1 & 1\\
0  & 0 & 0\\
0  & 1 & 2\\
\end{smallmatrix}\right].$
Two of the  facets of the integral cone $\NN A$ are $\sigma= \NN B$ and $\tau=\NN C$.
Define 
\[
R:=\frac{\kk[\NN A]}{P_\sigma \cap P_\tau} 
= \frac{\kk[x,xy,xy^2,xz,xz^2]}{\<x^2yz,x^2y^2z,x^2yz^2,x^2y^2z^2\>}.
\]
Note that there is a natural map 
\[
\phi\colon R \rightarrow 
\frac{R}{P_\sigma} \oplus \frac{R}{P_\tau} 
\cong \kk[\NN B] \oplus \kk [ \NN C] 
\cong \kk[x,xy,xy^2] \oplus \kk[x,xz,xz^2].
\]
The differential operators defined on the face $\sigma \cap \tau$ in each multidegree $(u,0,0)$ can be realized by operators in $\kk[\NN B] \oplus \kk [ \NN C]$ in multidegree $((u,0),(u,0))$ generated by 
\begin{equation}
\label{eq:oprnc2}
(\rho_u,\delta_u) := 
\left(x^{u}\cdot \prod\limits_{i=0}^{-2u-1} (2\theta_x-\theta_y-i),
x^{u}\cdot \prod\limits_{i=0}^{-2u-1} (2\theta_x-\theta_z-i)\right)
\end{equation}
This is the case by Theorem~\ref{thm:secondCharacterization} since 
\[
\pi_{\sigma, \sigma\cap \tau} \circ \rho_u \circ \iota_{\sigma \cap \tau, \sigma} 
= x^{u} \cdot \prod\limits_{i=0}^{-2u-1} (2\theta_x-i)
= \pi_{\tau, \sigma\cap \tau} \circ \delta_u \circ \iota_{\sigma \cap \tau, \tau}
\]
and 
\[
\pi_{\sigma, \sigma\cap \tau}(\rho_u(x^uy^v))=0 \quad \text{ for } \quad (u,v) \in \sigma \setminus \tau,
\]
as well as
\[
\pi_{\tau, \sigma\cap \tau}(\rho_u(x^uz^v))=0 \quad \text{ for } \quad (u,v) \in \tau \setminus \sigma.
\]

The operators on $\sigma \setminus \sigma \cap \tau$ in each multidegree $(u,v,0)$ (for $v \neq 0$) can be realized by operators in $\kk[\NN B] \oplus \kk [ \NN C]$ in multidegree $((u,v),(0,0))$ generated by 
\begin{equation}
\label{eq:oprnc3}
    (\rho_{u,v}, 0) 
    := 
    \left(x^{u}y^v\cdot 
    \left(\prod\limits_{i=0}^{-2u+v-1} (2\theta_x-\theta_y-i)\right)\left(\prod\limits_{i=0}^{-v} (\theta_y-i)\right), 0\right).
\end{equation}

Since
\[
\pi_{\sigma, \sigma\cap \tau}(\rho_{u,v}(x^uy^v))=0 \quad \text{ for } \quad (u,v) \in \sigma,
\]
both Conditions \eqref{cond:a} and \eqref{cond:b} of Theorem~\ref{thm:secondCharacterization} are clearly satisfied.

A similar argument shows that the operators on $\tau \setminus \sigma \cap \tau$ in each multidegree $(u,0,v)$ (for $v \neq 0$) can be realized by operators in $\kk[\NN B] \oplus \kk [ \NN C]$ in multidegree $((0,0),(u,v))$ generated by 
\begin{equation}\label{eq:oprnc4}
(0,\delta_{u,v}):=\left(0,x^{u}z^{v}\cdot \left(\prod\limits_{i=0}^{-2u+v-1} (2\theta_x-\theta_z-i)\right)\left(\prod\limits_{i=0}^{-v} (\theta_z-i)\right)\right).
\end{equation}

Combinations of operators of the forms represented by \eqref{eq:oprnc2}, \eqref{eq:oprnc3} and \eqref{eq:oprnc4} also produce operators in our ring of differential operators for all $v \neq 0$.
\end{example}

\begin{example}\label{ex:pyramid}
Set 
$A=\left[\begin{smallmatrix} 
1 & 1 & 1 & 1\\
0 & 1 & 0 & 1\\
0 & 0 & 1 & 1\\
\end{smallmatrix}\right]$.  
Recall $\NN A$ gives the integral cone of the ring
$S=\kk[\NN A]=\kk [ x, xy,xz, xyz]$.  
The facets of $\NN A$ are 
\begin{align*}
\sigma_1 &= \NN \<\bbe_1,\bbe_1+\bbe_2\>, \ 
\sigma_3 = \NN \<\bbe_1+\bbe_3,\bbe_1+\bbe_2+\bbe_3\>, \\
\sigma_2 &= \NN \<\bbe_1,\bbe_1+\bbe_3\>, 
\sigma_4= \NN \<\bbe_1+\bbe_3,\bbe_1+\bbe_2+\bbe_3\>.
\end{align*}
The support functions of the facets of $\NN A$ are $\theta_z$, $\theta_y$, $\theta_x-\theta_y$, $\theta_x-\theta_y$, respectively.  
By Theorem~\ref{thm:DRd}, the differential operators on $R$ in multidegree $\bbm$ are given by 
\[
x^{m_1}y^{m_2}z^{m_3}\cdot 
\left\< 
\left(\prod\limits_{i=1}^{-m_3-1}(\theta_z-i)\right)\left(\prod\limits_{i=1}^{-m_2-1}(\theta_y-i)\right)\left(\prod\limits_{i=1}^{-m_1+m_3-1}(\theta_x-\theta_z-i)\right)\left(\prod\limits_{i=1}^{-m_1+m_2-1}(\theta_x-\theta_y-i)\right)
\right\>.
\]
Set 
\[
R= \frac{S}{P_{\sigma_1} \cap P_{\sigma_2} \cap P_{\sigma_3} \cap P_{\sigma_4}} 
= \frac{\kk[x,xy,xz,xyz]}{\<x^2yz\>}. 
\]
Note that $T:= \kk[a,b,c,d]/\<ad,bc\> \cong R$. 
We know the differential operators of $T$ by \cite{TrDM} since $T$ is a Stanley--Reisner ring. 
The generators of $D(T)$ in multidegree $-\bbe_1$ are 
\[
a^{-1}\theta_a(\theta_a-1), \ \ 
a^{-1}\theta_a\theta_b, \ \ 
a^{-1}\theta_a\theta_c.
\]
There are not enough operators in the extended Weyl algebra in three variable to express these three operators. 
Given the mappings
\[
\xymatrix{
D(T)  \ar[d]^{\cong} \ar[r] & D(\kk[a,b]) \oplus D(\kk[a,c]) \oplus D(\kk[b,d]) \oplus D(\kk[c,d]) \ar[d]^{\cong}\\
D(R) \ar[r]  & D(\kk[x,xy]) \oplus D(\kk[x,xz]) \oplus D(\kk[xy,xyz]) \oplus D(\kk[xz,xyz])}
\]
with horizontal maps as in Theorem~\ref{thm:secondCharacterization},
we see that 
\begin{align*}
    a^{-1}\theta_a(\theta_a-1)  &\mapsto (a^{-1}\theta_a(\theta_a-1), a^{-1}\theta_a(\theta_a-1),0,0)\\ &\quad\cong (x^{-1}\prod\limits_{i=0}^1(\theta_x-\theta_y-i),x^{-1}\prod\limits_{i=0}^1(\theta_x-\theta_z-i),0,0) \\
    a^{-1}\theta_a\theta_b  &\mapsto   (a^{-1}\theta_a\theta_b,0,0,0) \cong (x^{-1}(\theta_x-\theta_y)\theta_y,0,0,0)\\
    a^{-1}\theta_a\theta_c  &\mapsto  (0, a^{-1}\theta_a\theta_c,0,0) \cong (0,x^{-1}(\theta_x-\theta_z)\theta_z,0,0). 
\end{align*}
To get an operator acting as $ a^{-1}\theta_a(\theta_a-1), a^{-1}\theta_a\theta_b,$ or  $a^{-1}\theta_a\theta_c$ in terms of the linear support functions on $S$, we would need fractional expressions of the forms
\[
x^{-1}\cdot \dfrac{\prod\limits_{i=0}^1(\theta_x-\theta_y-i)\prod\limits_{i=0}^1(\theta_x-\theta_z-i)}{\prod\limits_{i=0}^1(\theta_x-i)}, x^{-1}\cdot \dfrac{(\theta_x-\theta_y)\theta_y(\theta_x-\theta_z)}{\theta_x},
x^{-1}\cdot \dfrac{(\theta_x-\theta_y)(\theta_x-\theta_z)\theta_z}{\theta_x},
\]
which do not come from the extended Weyl algebra.
\end{example}

\section{Rings of differential operators of quotient rings}
\label{sec:SmStmod} 
In this section, we consider a special class of toric face rings, namely quotients of normal affine semigroup rings by radical monomial ideals.
Our main goal is to characterize which differential operators on the quotient arise from operators on the ambient affine semigroup ring. 
We build off the techniques of \cite[Proposition 1.6]{SmStDO} and provide a careful inductive argument on the number of facets of the Newton polytope in order to generalize beyond the case where the ambient ring $S$ is regular.  
In Section~\ref{sec:Gorenstein}, we show that, if $S$ is regular (or even Gorenstein) and $J$ is the interior ideal of $S$, then $J\cD(S) = \cD(S, J)$, which gives a direct argument that our result agrees with \cite[Proposition 1.6]{SmStDO} in that case.

For any $\delta\in \cD(R)$, 
we will use $\delta J$ to denote the set consisting of products $\delta\circ f$ of differential operators for any $f \in J$ and $\delta \ast f$ to denote the action $\delta(f)$ to avoid any possible confusion.

\begin{prop}
\label{prop:SmStmod}
Let $R$ be commutative $\kk$-algebra and $J$ be an ideal of $R$. Let 
\[
\II(J) := \{\delta\in \cD(R)\mid \delta\ast J\subset J\}.
\]
The differential operators on $R$ that induce maps on $R/J$ are precisely those in $\II(J)$. \\
Further, there is an embedding of rings 
\[
\frac{\II(J)}{\cD(R,J)} 
\hookrightarrow
\cD(R/J).
\]
\end{prop}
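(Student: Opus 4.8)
The plan is to verify the three claims in order: (1) a differential operator $\delta \in \cD(R)$ descends to a $\kk$-linear map $\overline\delta\colon R/J \to R/J$ if and only if $\delta \in \II(J)$; (2) the induced map $\overline\delta$ is itself a differential operator on $R/J$, so that $\delta \mapsto \overline\delta$ gives a well-defined map $\II(J) \to \cD(R/J)$; and (3) this map is a ring homomorphism whose kernel is exactly $\cD(R,J)$, yielding the claimed embedding $\II(J)/\cD(R,J) \hookrightarrow \cD(R/J)$.

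For (1): a $\kk$-linear map $\delta\colon R \to R$ induces a well-defined $\kk$-linear map on $R/J$ precisely when $\delta(J) \subseteq J$, i.e. $\delta \ast J \subset J$; this is immediate from the universal property of the quotient, and it is exactly the condition defining $\II(J)$. For (2): if $\delta \in \cD^i(R)$, I would show $\overline\delta \in \cD^i(R/J)$ by induction on $i$. When $i=0$, $\delta$ is multiplication by some $r \in R$, so $\overline\delta$ is multiplication by $\bar r \in R/J$, hence in $\cD^0(R/J)$. For the inductive step, for $\bar f \in R/J$ one checks that the commutator $[\overline\delta, \bar f]$ on $R/J$ equals $\overline{[\delta, f]}$, the reduction of the order-$(i-1)$ operator $[\delta,f] \in \cD^{i-1}(R)$ (here one uses that $[\delta,f](J) \subseteq J$ whenever $\delta(J)\subseteq J$, since $f J \subseteq J$ and $\delta(fJ), \delta(J) \subseteq J$, so $[\delta,f] \in \II(J)$ too and its reduction makes sense); by induction $\overline{[\delta,f]} \in \cD^{i-1}(R/J)$, giving $\overline\delta \in \cD^i(R/J)$. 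That $\II(J)$ is a subring of $\cD(R)$ also follows, since it is closed under composition and $\kk$-linear combinations.

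For (3): the assignment $\delta \mapsto \overline\delta$ is additive and $\kk$-linear by construction, and it is multiplicative because $\overline{\delta \circ \delta'} = \overline\delta \circ \overline{\delta'}$ on $R/J$ (both send $\bar f$ to $\overline{\delta(\delta'(f))}$), so it is a ring homomorphism $\II(J) \to \cD(R/J)$. Its kernel consists of those $\delta \in \II(J)$ for which $\overline\delta = 0$, i.e. $\delta(R) \subseteq J$; by the description $\cD(R,J) = \{\delta \in \cD(R) \mid \delta(R) \subset J\}$ recorded in Section~\ref{sec:DopsalgebraRetracts}, this kernel is precisely $\cD(R,J)$ (and note $\cD(R,J) \subseteq \II(J)$ automatically, since $\delta(R)\subseteq J$ forces $\delta(J)\subseteq J$). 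The first isomorphism theorem then gives the injection $\II(J)/\cD(R,J) \hookrightarrow \cD(R/J)$.

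The only step requiring genuine care is (2), the verification that the reduction of a differential operator is again a differential operator; the subtlety is bookkeeping with commutators modulo $J$, specifically confirming that $[\delta, f]$ again lies in $\II(J)$ so that the inductive hypothesis applies to its reduction. Everything else is formal manipulation with quotients and the first isomorphism theorem.
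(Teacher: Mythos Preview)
Your proposal is correct and follows essentially the same route as the paper: universal property of quotients for part (1), then the map $\II(J)\to\cD(R/J)$ with kernel $\cD(R,J)$ and the first isomorphism theorem. The paper simply asserts that operators in $\II(J)$ induce differential operators on $R/J$ and cites \cite[1.2]{Mussontori} for the kernel identification, whereas you supply the order-induction on commutators and verify the kernel directly; your added detail is sound and fills in what the paper leaves implicit.
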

\begin{proof}  
The first statement follows from the universal property of quotients. 
In fact, for any $\delta \in \cD(R)$, if $\delta$ induces an operator in $D(R/J)$, 
i.e., a map from $R/J$ to $R/J$, then we must have $\delta \ast J \subset J$. 
So, $\delta$ belongs to $\II(J)$ by definition.

Every operator in $\II(J)$ induces a differential operator from $R/J$ to itself. 
Now consider the map 
\[
\rho\colon \II(J) \rightarrow \cD(R/J)
\quad\text{given by}\quad \rho(\delta)=\delta'.
\]
The kernel is $\ker (\rho)=\{\delta \mid \delta \ast R \subseteq J\}=\cD(R,J)$ by \cite[1.2]{Mussontori}.  
Hence $\rho$ induces an injective map $\overline{\rho}\colon \II(J)/\cD(R,J) \hookrightarrow \cD(R/J)$ given by $\overline{\rho}(\overline{\delta})=\delta'$, as desired.  
\end{proof}

Before stating the main theorem, we will need some notation.  
Let $R$ be the normal affine semigroup ring defined by a matrix $A$. 
When referring to an arbitrary face or facet of $A$, we will use $\tau$ or $\sigma$, respectively.  
Every face of $A$ can be expressed as an intersection of facets. 
Recall the correspondence between the $\ZZ^d$-graded primes of $R$ and the faces of $A$. A face $\tau$ of $A$ corresponds to the prime ideal
$P_\tau:= 
\<t^{\bbm}\mid \bbm\in\NN A\setminus\NN\tau\>$.

Consider the $\ZZ^d$-graded radical monomial ideal $J:= \displaystyle \bigcap_{i=1}^r P_{\tau_i}$, with the face $\tau_i := \displaystyle \bigcap_{j=1}^{k_i}\sigma_{i,j}$ for facets $\sigma_{i,j}$ of $A$. 
For a fixed facet $\sigma$ of $A$, set 
\begin{equation}
\label{eqn:H}
H_{\sigma,{\bbm}}(\theta) := 
F_{\sigma}(\theta)+F_{\sigma}({\bbm}).
\end{equation}
When referring to a facet $\sigma_{i,j}$ from the definition of $J$, we will replace $\sigma$ in $F_\sigma$ and $H_{\sigma,{\bbm}}(\theta)$ with $i,j$, writing instead $F_{i,j}$ and $H_{i,j,{\bbm}}$. Lastly, for an integer $k$, let $[k]$ denote the set $\{1,2,\dots,k\}$, and let ${\bbj} = (j_1,j_2,\dots,j_r)\in K_J:= [k_1]\times[k_2]\times\cdots\times[k_r]$ denote an $r$-tuple of integers in the allowable range with respect to the ideal $J$. 

Finally whenever we have a product $L$ of linear factors, 
let $\rad(L)$ denote the (monic) generator of the radical of the ideal generated by $L$; in other words, $\rad(L)$ is a product of distinct linear polynomials in $\theta$.

\begin{thm}
\label{thm:normal-diffops}
Let $R$ be a normal affine semigroup ring defined by the $d \times n$ matrix $A$, 
and let $J = \displaystyle \bigcap_{i =1}^r P_{\tau_i}$ be the radical monomial ideal corresponding to the faces $\tau_i = \displaystyle\bigcap_{j=1}^{k_i} \sigma_{ij}$. 
Let $G_{\bbm}(\theta)$ and $H_{i,j,\bbm}(\theta)$ be as in \eqref{eqn:G} and \eqref{eqn:H}, respectively. 
Then for ${\bbm} \in\ZZ^d$, 
\begin{align*}
\left[\frac{\II(J)}{D(R,J)}\right]_{\bbm}
& \ = \ 
t^{\bbm}\cdot 
\dfrac{\left\< 
G_{\bbm}(\theta)\cdot 
\rad\left(
\displaystyle\prod_{i=1}^r
\displaystyle\prod_{F_{i,j}(\bbm)< 0} H_{{i,j},\bbm}(\theta)\right)
\ \bigg\vert \ 
{\bbj}\in K_J
\right\>}
{\left\< 
G_{\bbm}(\theta)\cdot 
\rad\left( 
\displaystyle\prod_{i=1}^r
\displaystyle\prod_{F_{i,j}(\bbm)\leq 0} 
H_{{i,j},\bbm}(\theta)\right)
\ \bigg\vert \ 
{\bbj}\in K_J
\right\>}, 
\end{align*}
which is precisely the contribution from $D(R)$ within the $\bbm$-th graded piece of $D(R/J)$ induced by the operators in $D(R)$. 
\end{thm}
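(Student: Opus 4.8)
The plan is to compute the $\bbm$-th graded piece of $\II(J)$ directly and then quotient by $\cD(R,J)_{\bbm}$, using Theorem~\ref{thm:DRd} as the starting description of $\cD(R)_{\bbm} = t^{\bbm}\cdot\langle G_{\bbm}(\theta)\rangle$. An operator $\delta = t^{\bbm} q(\theta)$ with $q \in \langle G_{\bbm}\rangle$ lies in $\II(J)$ exactly when $\delta \ast J \subseteq J$; since $J$ is $\ZZ^d$-graded and monomial, it suffices to test $\delta$ on monomials $t^{\bba}$ with $\bba \in \NN A \cap \bigcup_i (\NN A \setminus \NN\tau_i)$ and demand that $q(\bba)\,t^{\bba+\bbm}$ lie in $J$ whenever $t^{\bba}\in J$. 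First I would unwind what membership in $J = \bigcap_i P_{\tau_i}$ means combinatorially: $t^{\bba}\notin J$ iff $\bba \in \NN\tau_i$ for some $i$, i.e.\ $F_{i,j}(\bba)=0$ for all $j\in[k_i]$; and for the image, $t^{\bba+\bbm}\notin J$ iff there is some $i$ with $F_{i,j}(\bba+\bbm) = F_{i,j}(\bba)+F_{i,j}(\bbm) = 0$ for all $j\in[k_i]$. So the constraint on $q$ is: for every $\bba\in\NN A$ such that $t^{\bba}\in J$ but $t^{\bba+\bbm}\notin J$, one needs $q(\bba) = 0$.

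Next I would translate this pointwise vanishing condition into an ideal-membership condition in $\kk[\theta]$, which is where the $\rad$ and the product over $\bbj\in K_J$ come from. The set of ``bad'' points $\bba$ — those lying in $\NN A\setminus\NN\tau_i$ for all $i$ but with $\bba+\bbm$ landing in some $\NN\tau_i$ — is, up to the translation by $\bbm$, cut out inside $\NN A$ by the vanishing of the linear forms $F_{i,j}(\theta)+F_{i,j}(\bbm) = H_{i,j,\bbm}(\theta)$. For a fixed choice $\bbj = (j_1,\dots,j_r)\in K_J$, picking the index $j_i$ in each block records ``one obstruction to $\bba\in\NN\tau_i$'': the product $\prod_{i=1}^r H_{i,j_i,\bbm}(\theta)$ vanishes on precisely those $\bba$ for which each $\NN\tau_i$-membership of $\bba+\bbm$ is witnessed along direction $j_i$. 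Intersecting over all $\bbj$ and using that $\NN A$ is normal — so that $\ZZ^d$-Zariski density of $\NN A$ along each face makes pointwise vanishing on the relevant translated-face strata equivalent to divisibility by the corresponding $\rad$ of a product of linear forms — gives that $q$ vanishes on all bad points iff $q$ is divisible by $\rad\big(\prod_i \prod_{F_{i,j}(\bbm)<0} H_{i,j,\bbm}(\theta)\big)$ for each $\bbj\in K_J$ (the restriction $F_{i,j}(\bbm)<0$ appears because if $F_{i,j}(\bbm)\geq 0$ then $F_{i,j}(\bba)=0$ forces $F_{i,j}(\bba+\bbm)\geq 0$ with the relevant strata empty or already handled by $G_{\bbm}$). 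Combining with $q\in\langle G_{\bbm}\rangle$ yields $\II(J)_{\bbm} = t^{\bbm}\cdot\big\langle G_{\bbm}(\theta)\cdot\rad(\prod_i\prod_{F_{i,j}(\bbm)<0}H_{i,j,\bbm}(\theta)) \mid \bbj\in K_J\big\rangle$, which is the numerator.

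Then I would identify the denominator $\cD(R,J)_{\bbm}$ as the operators that map all of $R$ into $J$: $\delta = t^{\bbm}q(\theta)$ sends $t^{\bba}\mapsto q(\bba)t^{\bba+\bbm}$, which must lie in $J$ for every $\bba\in\NN A$. This is the same analysis with the hypothesis ``$t^{\bba}\in J$'' dropped, so now we must kill $q(\bba)$ for all $\bba\in\NN A$ with $t^{\bba+\bbm}\notin J$, including the boundary cases $F_{i,j}(\bba)=0$ with $F_{i,j}(\bbm)=0$; this widens the forced vanishing locus to include the strata where $F_{i,j}(\bbm)\leq 0$, giving the generator $G_{\bbm}(\theta)\cdot\rad(\prod_i\prod_{F_{i,j}(\bbm)\leq 0}H_{i,j,\bbm}(\theta))$ for each $\bbj$, i.e.\ the denominator. (The inductive argument on the number $r$ of facets defining $J$, advertised before the theorem, enters precisely in checking that the intersection-over-$i$ combinatorics collapses to this product-and-$\rad$ form; I would set up the induction by peeling off one $\tau_r$ and relating the locus for $J$ to that for $\bigcap_{i<r}P_{\tau_i}$.) The final clause — that this quotient is ``precisely the contribution from $\cD(R)$'' inside $\cD(R/J)_{\bbm}$ — is then immediate from Proposition~\ref{prop:SmStmod}, which already gives the ring embedding $\II(J)/\cD(R,J)\hookrightarrow\cD(R/J)$, together with the observation that the map is graded.

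I expect the main obstacle to be the passage from pointwise vanishing of $q$ on the arithmetic set of ``bad'' lattice points $\{\bba\in\NN A \mid t^{\bba}\in J,\ t^{\bba+\bbm}\notin J\}$ to exact divisibility in $\kk[\theta]$ by the stated $\rad$ of a product of linear forms — and in particular to showing that no \emph{extra} linear factors are forced (so that the numerator and denominator are exactly as written, not merely contained in them). This requires that each translated face-stratum $\{F_{i,j}(\theta) = -F_{i,j}(\bbm)\}\cap(\bbm+\RR_{\geq0}A)$ actually contains enough points of $\NN A$ to be Zariski dense in its linear span over $\kk$; normality of $\NN A$ is what makes this work (lattice points fill up each face), and the bookkeeping of which $(i,j)$ pairs contribute at a given $\bbm$ — governed by the sign of $F_{i,j}(\bbm)$ — is the delicate combinatorial heart, handled by the induction on $r$.
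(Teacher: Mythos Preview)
Your approach is essentially the paper's: compute $\II(J)_{\bbm}$ and $\cD(R,J)_{\bbm}$ inside $t^{\bbm}\langle G_{\bbm}(\theta)\rangle$ by translating the stability and annihilation conditions into vanishing of $q$ on lattice strata cut out by the linear forms $H_{i,j,\bbm}$, distinguishing the cases $F_{i,j}(\bbm)<0$ versus $F_{i,j}(\bbm)\leq 0$, and then quotient via Proposition~\ref{prop:SmStmod}. The paper carries this out by direct case analysis on the sign of $F_{i,j}(\bbm)$ (conditions (i) and (ii) in its proof) rather than a formal Zariski-density argument, and despite the preamble no explicit induction on $r$ actually appears in the proof; otherwise the skeleton is the same.

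One slip to correct: you write that $q$ vanishes on all bad points iff $q$ is ``divisible by $\rad(\ldots)$ for \emph{each} $\bbj\in K_J$.'' Divisibility by each such element would place $q$ in the \emph{intersection} $\bigcap_{\bbj}\langle\,\rad(\ldots)\,\rangle$, not in the ideal \emph{generated} by these elements, which is what the theorem asserts and what you yourself write in the displayed formula for $\II(J)_{\bbm}$. Concretely, take $r=1$, $k_1=2$, with both $F_{1,j}(\bbm)<0$: the bad locus is the codimension-two stratum $\{H_{1,1,\bbm}=H_{1,2,\bbm}=0\}$, whose ideal is $\langle H_{1,1,\bbm},\,H_{1,2,\bbm}\rangle$ (the ideal generated by the two one-term products indexed by $\bbj\in\{(1),(2)\}$), not $\langle H_{1,1,\bbm}\cdot H_{1,2,\bbm}\rangle$. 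Your geometric setup is in fact correct --- intersecting the hypersurfaces $V\big(\prod_i H_{i,j_i,\bbm}\big)$ over all $\bbj\in K_J$ does recover $\bigcup_i\bigcap_{j\in[k_i]} V(H_{i,j,\bbm})$, exactly the bad locus --- and the ideal of an intersection of varieties is the (radical of the) \emph{sum} of their ideals. So the error is only in that one clause; once ``divisible by $\ldots$ for each $\bbj$'' is replaced by ``lies in the ideal generated by $\ldots$ as $\bbj$ ranges over $K_J$,'' your outline matches the paper's proof.
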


To clarify notation, we will consider an example before proceeding with the proof of Theorem~\ref{thm:normal-diffops}.
\begin{example}
Let $R=\kk[x_1, x_2, x_3]$ and $J = \< x_1x_2,x_1x_3 \>$, in which case $r = 2$, $k_1 = 1$, $k_2 = 2$, and $K_J = \{1\} \times \{1,2\}$.  The prime decomposition of $J$ is $J =  \< x_1 \> \cap \< x_2,x_3 \> = P_{\sigma_{1,1}} \cap P_{\sigma_{2,1} \cap \sigma_{2,2}}$, and an element of $K_J$ corresponds to a choice of one prime from the set $\{\< x_1 \> \}$ and one prime from the set $\{\< x_2 \>, \< x_3 \> \}$. In the notation of Theorem~\ref{thm:normal-diffops},
\[
\II(J)_{\bbm}
= x^{\bbm} \cdot\left\<
G_{\bbm}(\theta) \prod_{\substack {F_{1,1}(\bbm)<0,\\ F_{2,1}(\bbm)<0}}  H_{1,1,\bbm}(\theta)
H_{2,1,\bbm}(\theta),\ \  G_{\bbm}(\theta) \prod_{\substack {F_{1,1}(\bbm)<0,\\ F_{2,2}(\bbm)<0}} H_{1,1,\bbm}(\theta)  H_{2,2,\bbm}(\theta)
\right\>.
\] 
It is the presence of $H_{1,1,\bbm}(\theta)$ as a factor of each generator $f$ of $\II(J)_{\bbm}$ that guarantees that whenever $x^{\bbm'} \in J$, we have $f \ast x^{\bbm'} \in \< x_1 \>$. Similarly, it is the presence of either $H_{2,1,\bbm}(\theta)$ or $H_{2,2,\bbm}(\theta)$ that guarantees that $f \ast x^{\bbm'} \in \< x_2,x_3 \>$.  Similarly, we have 
\[
D(R,J)_{\bbm}
= x^{\bbm} \cdot\left\<
G_{\bbm}(\theta) \prod_{\substack {F_{1,1}(\bbm)\leq 0,\\ F_{2,1}(\bbm)\leq 0}}  H_{1,1,\bbm}(\theta)
H_{2,1,\bbm}(\theta),\ \  G_{\bbm}(\theta) \prod_{\substack {F_{1,1}(\bbm)\leq 0,\\ F_{2,2}(\bbm)\leq 0}} H_{1,1,\bbm}(\theta)  H_{2,2,\bbm}(\theta)
\right\>.
\] 
As in the general formula, the difference between the computations of $\II(J)_{\bbm}$ and $D(R,J)_{\bbm}$ is seen in the difference between the strict inequalities $F_{i,j}(\bbm)< 0$ of $\II(J)_{\bbm}$ and the weak inequalities $F_{i,j}(\bbm) \leq 0$ of $D(R,J)_{\bbm}$.  Finally, the differential operators in $D(R)_{\bbm}$ which induce maps on $R/J$  are precisely those in $\II(J)_{\bbm}$ making $ \frac{\II(J)_{\bbm}}{D(R,J)_{\bbm}}$ a submodule of $D(R/J)_{\bbm}$, which respects the grading on numerator and denominator.
\end{example}

\begin{proof}[Proof of Theorem~\ref{thm:normal-diffops}]
Observe that 
\[
t^{\bbm}\cdot\II(\Omega({\bbm})) 
= t^{\bbm} \cdot\left\< \prod_{ F_\sigma(\bbm)<0} \prod_{i=0}^{-F_\sigma(\bbm)-1}(F_\sigma(\theta)-i) \right\>.
\]
Now for each $\bbm' \in \ZZ^d$, $t^{\bbm'} \in J$ if and only if $F_\sigma(\bbm')>0$ for all facets $\sigma$ of $A$.  
Because $\theta_i \ast t^{\bbm'} = m_i' t^{\bbm'}$ for each $i$, 
\[
\left[ t^{\bbm} \cdot
\prod_{ F_\sigma(\bbm)<0} 
\prod_{i=0}^{-F_\sigma(\bbm)-1}
(F_\sigma(\theta)-i) \right] 
\ast t^{\bbm'} 
= t^{\bbm+\bbm'} \cdot
\prod_{ F_\sigma(\bbm)<0} \prod_{i=0}^{-F_\sigma(\bbm)-1}
(F_\sigma(\bbm')-i).
\]
Hence, 
\[
\left[ t^{\bbm} \cdot G_{\bbm}(\theta) \right]
\ast t^{\bbm'} 
=t^{\bbm+\bbm'}\cdot G_{\bbm}(\bbm')
\in J
\]
exactly when at least one of the following two conditions is satisfied: \begin{enumerate}
\item $t^{\bbm+\bbm'} \in J$,\label{cond:powerinJ} or 
\item $ G_{\bbm}(\bbm')= 0$.\label{cond:Gm=0}
\end{enumerate}
First, for each $\bbm'$ such that $t^{\bbm'} \in J$, we must have that
$F_\sigma({\bbm'})+F_\sigma({\bbm}) 
= F_\sigma({\bbm+\bbm'})\geq 0$ 
for all facets $\sigma$ and that, 
for each $i \in [r]$, there exists some $j \in [k_i]$ so that $F_{i,j}({\bbm+\bbm'})>0$. 
We consider two conditions:
\begin{enumerate}[(i)]
\item ${\bbm}$ satisfies $F_{\sigma'}(\bbm)<0$ for some facet $\sigma'$, \label{cond:firstFacet} 

\item $F_{\sigma'}(\bbm)\geq 0$ for all facets $\sigma'$ and there exists some $i \in [r]$ for which $F_{i,j}(\bbm+\bbm')= 0$ for all $j \in [k_i]$. \label{cond:secondFacet}
\end{enumerate}

We claim that if either Condition \eqref{cond:firstFacet} or Condition \eqref{cond:secondFacet} holds, then there exists some $\bbm'$  with $t^{\bbm'} \in J$ and $G_{\bbm}(\bbm') \neq 0$. 
If Condition \eqref{cond:firstFacet} holds, then $G_{\bbm} (\bbm') \neq 0$ for exactly the $\bbm'$ that satisfy $F_{\sigma'}({\bbm})=-F_{\sigma'}({\bbm'})$ and $F_{\sigma}({\bbm'})\gg F_{\sigma}({\bbm})$ for all facets $\sigma\neq \sigma'$.
However, for all such ${\bbm}$, $H_{\sigma',{\bbm}}({\bbm'})G_{\bbm}({\bbm'})=0$ for all ${\bbm'}$ that satisfy $F_{\sigma'}({\bbm})=-F_{\sigma'}({\bbm'})$ and $F_{\sigma}({\bbm'})\gg -F_{\sigma}({\bbm})$ for all facets $\sigma\neq \sigma'$.

If Condition \eqref{cond:secondFacet} holds, then $G_{\bbm}(\bbm')$ will fail to vanish exactly for the ${\bbm'}$ for which there exists an $i$ with  $F_{i,j}({\bbm})=-F_{i,j}({\bbm'})$ for all $j\in[k_i]$
and $F_{i',j}({\bbm'})\gg F_{i',j}({\bbm})$ for all facets $\sigma_{i',j}$ with $i\neq i'$.  
However, for all such ${\bbm}$, 
$H_{i,j,{\bbm}}({\bbm'})G_{{\bbm}}({\bbm'})=0$ 
for all 
$i \in [r]$, $j \in [k_i]$, and ${\bbm'}$ 
satisfying the hypotheses that 
$F_{i,j}({\bbm})=-F_{i,j}({\bbm'})$ for all $j\in[k_i]$ 
and $F_{i',j}({\bbm'})\gg F_{i',j}({\bbm})$ for all facets $\sigma_{i',j}$ for $i\neq i'$.

Combining these calculations, 
\[
\II(J)_{\bbm} = t^{\bbm} \cdot\left\< 
G_{\bbm}(\theta)\cdot 
\rad\left(\prod_{i=1}^r
\prod_{F_{i,j}(\bbm)< 0} H_{i,j,\bbm}(\theta)
\right)
\ \bigg\vert \ 
{\bbj}\in K_J
\right\>.
\]

We compute $D(R,J)$ similarly. Now we begin with an arbitrary ${\bbm}\in\ZZ^d$ and $t^{\bbm'}\in R$. 
The only distinction between this calculation and the previous calculations for $\II(J)$ is that ${\bbm'}$ is now taken from a larger set. 
Namely, $F_\sigma({\bbm'})$ can now be $0$ as well. 
Then, Condition \eqref{cond:powerinJ} holds when $F_\sigma({\bbm}+{\bbm'})>0$ whenever $t^{\bbm'} \in R$, a condition automatically satisfied when $F_{\sigma}({\bbm})>0$. 
Again, in order to address Condition \eqref{cond:Gm=0}, we consider the two Conditions \eqref{cond:firstFacet} and \eqref{cond:secondFacet}, above. 
We note that, by the same argument used to compute $\II(J)$, if either Condition \eqref{cond:firstFacet} or Condition \eqref{cond:secondFacet} holds, then there exists some vector $\bbj \in K_J$ for which 
\[
G_{{\bbm}}({\bbm'})\cdot 
\rad\left(
\prod_{i=1}^r
\prod_{F_{i,j({\bbm})}< 0} H_{i,j,{\bbm}}({\bbm'})
\right)
\] 
is nonzero for some 
${\bbm'}$  with $t^{\bbm'} \in R$. 
Hence,
\[
D(R,J)_{\bbm} = t^{\bbm} \cdot\left\< 
G_{\bbm}(\theta)\cdot 
\rad\left(\prod_{i=1}^r
\prod_{F_{i,j}(\bbm)\leq 0} H_{i,j,\bbm}(\theta)
\right)
\ \bigg\vert \ 
{\bbj}\in K_J
\right\>.
\]

It now follows from Proposition~\ref{prop:SmStmod} that 
\begin{align*} 
\frac{\II(J)_{\bbm}}{D(R,J)_{\bbm}}
& \ = \ 
t^{\bbm}\cdot 
\dfrac{\left\< 
G_{m}(\theta)\cdot 
\rad\left(
\displaystyle\prod_{i=1}^r
\displaystyle\prod_{F_{i,j}(\bbm)< 0}
H_{{i,j},\bbm}(\theta)
\right)
\ \bigg\vert \ 
{\bbj}\in K_J
\right\>}
{\left\< 
G_{m}(\theta)\cdot 
\rad\left(
\displaystyle\prod_{i=1}^r
\displaystyle\prod_{F_{i,j}(\bbm)\leq 0} 
H_{{i,j},\bbm}(\theta)
\right)
\ \bigg\vert \ 
{\bbj}\in K_J
\right\>}, 
\end{align*}
as desired. 
\end{proof}

\section{Characterizing Gorenstein rings via differential operators}
\label{sec:Gorenstein} 
In this section, we compare $\cD(R, J)$ and $J\cD(R)$ when $R$ is a normal affine semigroup ring.  We find that the equality $\cD(R, J)=J\cD(R)$ holds if $J$ is a principal monomial ideal (see Proposition~\ref{prop:principalideals}). We then restrict to the special case of $J = \omega_R$, the intersection of all graded height one prime ideals of $R$.  That is, $\omega_R$ is the defining ideal of the union of all facets.  We choose the notation $\omega_R$ for this ideal, sometimes called the interior ideal, because it is a canonical module for $R$ (see, for example, \cite[Proposition~8.2.9]{CLSToricVarieties}).  Recall that $R$ is Gorenstein if and only if $\omega_R$ is principal (see, for example, \cite[Theorem~3.3.7]{CMRings}).  The main result of the section is as follows:

\begin{thm}
\label{thm:gor_char}
Let $R$ be a normal affine semigroup ring, and let $\omega_R$ be the intersection of all graded height one prime ideals of $R$. 
Then $R$ is Gorenstein if and only if $\omega_R D(R) = D(R, \omega_R) $.
\end{thm}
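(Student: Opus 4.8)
The forward direction is immediate: if $R$ is Gorenstein, then $\omega_R = (t^{\bba})$ is a principal monomial ideal by the characterization recalled above, so Proposition~\ref{prop:principalideals} applies directly to give $\omega_R \cD(R) = \cD(R, \omega_R)$. The content is in the converse, and my plan is to prove the contrapositive: if $R$ is not Gorenstein, then $\omega_R\cD(R) \subsetneq \cD(R,\omega_R)$.

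The strategy is to work in a single well-chosen multidegree and exhibit an operator in $\cD(R,\omega_R)_{\bbm}$ that does not lie in $(\omega_R \cD(R))_{\bbm}$. First I would record, using Theorem~\ref{thm:DRd}, that $\cD(R)_{\bbm} = t^{\bbm}\<G_{\bbm}(\theta)\>$ over $\kk[\theta]$, and that $\omega_R = \bigcap_{\sigma}P_\sigma$ over all facets $\sigma$; as a $\ZZ^d$-graded module, $\omega_R = \bigoplus_{\bbm'\in\Omega}\kk\cdot t^{\bbm'}$ where $\Omega = \{\bbm'\in\NN A \mid F_\sigma(\bbm') > 0 \text{ for all facets }\sigma\}$ is the set of interior lattice points. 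Using $\theta_i\ast t^{\bbm'} = m_i' t^{\bbm'}$ and arguing exactly as in the proof of Theorem~\ref{thm:normal-diffops}, I would identify
\[
\cD(R,\omega_R)_{\bbm} = t^{\bbm}\cdot\left\<\, G_{\bbm}(\theta)\cdot\prod_{F_\sigma(\bbm)\le 0}\bigl(F_\sigma(\theta)+F_\sigma(\bbm)\bigr)\,\right\>
\]
(the single ``prime'' $\omega_R = P_{\tau}$ with $\tau$ the intersection of \emph{all} facets is the case $r=1$, $k_1 = $ number of facets, so the $\rad$ of a product over all $j$ with $F_{1,j}(\bbm)\le 0$ appears; here each linear form is distinct so $\rad$ is harmless). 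On the other hand $\omega_R\cD(R)$ is spanned in multidegree $\bbm$ by products $t^{\bbm'}\cdot t^{\bbm-\bbm'}h(\theta)$ with $\bbm'\in\Omega$ and $h\in\<G_{\bbm-\bbm'}\>$; tracking the commutation $t^{\bbm'}g(\theta) = g(\theta+ \text{(shift)})t^{\bbm'}$, this is $t^{\bbm}\cdot\<\, g(\theta)\,G_{\bbm-\bbm'}(\theta)\,:\, \bbm'\in\Omega,\ g\in\kk[\theta]\,\>$ up to a shift of $\theta$ absorbed into $g$. The key quantitative input is then a comparison of vanishing loci: $G_{\bbm-\bbm'}(\theta)$ includes the factors $F_\sigma(\theta) - i$ for $0\le i \le -F_\sigma(\bbm-\bbm') - 1 = -F_\sigma(\bbm) + F_\sigma(\bbm') - 1$, i.e. an \emph{extra} $F_\sigma(\bbm')$ consecutive factors beyond those in $G_{\bbm}$, for every facet with $F_\sigma(\bbm) < F_\sigma(\bbm')$, which — since $\bbm'$ is interior — is every facet once $\bbm'$ is chosen deep enough relative to $\bbm$.

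The crux is the following Gorenstein-sensitive observation, which I would isolate as the main obstacle: one can choose a single interior vector $\bbc$ with $F_\sigma(\bbc) = 1$ for \emph{every} facet $\sigma$ \textbf{if and only if} $R$ is Gorenstein (this $\bbc$ generates $\omega_R$). When $R$ is not Gorenstein, every interior lattice point $\bbm'$ has $F_\sigma(\bbm')\ge 2$ for at least one facet, so $\omega_R\cD(R)_{\bbm}$ is generated by operators $t^{\bbm}g(\theta)G_{\bbm-\bbm'}(\theta)$ each of which, relative to the target $t^{\bbm}G_{\bbm}(\theta)\prod_{F_\sigma(\bbm)\le 0}(F_\sigma(\theta)+F_\sigma(\bbm))$, carries a spurious repeated linear factor (some $F_\sigma(\theta)+F_\sigma(\bbm)$ raised to a power $\ge 2$, or an unwanted consecutive factor) along every facet direction. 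Concretely I would pick $\bbm$ with $F_\sigma(\bbm) = 0$ for all facets $\sigma$ (e.g. $\bbm = \boldzero$, so $G_{\boldzero} = 1$): then $\cD(R,\omega_R)_{\boldzero} = \<\prod_\sigma F_\sigma(\theta)\>$ contains the \emph{squarefree} operator $\prod_\sigma F_\sigma(\theta)$, while every generator of $\omega_R\cD(R)_{\boldzero}$ has the form $g(\theta)\prod_\sigma\prod_{i=0}^{F_\sigma(\bbm')-1}(F_\sigma(\theta) + F_\sigma(\bbm')^{-}\cdots)$ — a polynomial divisible, for the facet $\sigma$ where $F_\sigma(\bbm')\ge 2$, by two distinct shifts of $F_\sigma(\theta)$ at once, hence not dividing $\prod_\sigma F_\sigma(\theta)$; since $\kk[\theta]$ is a UFD and the $F_\sigma$ are pairwise non-proportional linear forms, no $\kk[\theta]$-combination of such generators can equal $\prod_\sigma F_\sigma(\theta)$. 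I would then verify this non-membership cleanly by evaluating at a suitable point of $\kk^d$ (or by a degree/divisibility count facet by facet), completing the contrapositive. The one genuine subtlety to nail down is the bookkeeping of the $\theta$-shift incurred when moving $t^{\bbm'}$ past $G_{\bbm-\bbm'}(\theta)$ and confirming it does not accidentally collapse the extra factors — this is where I would spend the most care, but it is the same shift computation already used implicitly in the proof of Theorem~\ref{thm:normal-diffops}.
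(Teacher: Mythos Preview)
Your overall strategy matches the paper's exactly: prove the contrapositive of the converse by working in multidegree $\bbm=\boldzero$, where $\cD(R,\omega_R)_{\boldzero}=\langle\prod_\sigma F_\sigma(\theta)\rangle$ while $(\omega_R\cD(R))_{\boldzero}=\sum_j\langle G_{-\bbc_j}(\theta)\rangle$ as $\bbc_j$ runs over the minimal generators of $\omega_R$. Two comments.

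First, your worry about a $\theta$-shift is unnecessary: left multiplication by $t^{\bbc}$ gives $t^{\bbc}\cdot\bigl(t^{\bbm-\bbc}G_{\bbm-\bbc}(\theta)\bigr)=t^{\bbm}G_{\bbm-\bbc}(\theta)$ directly, with no commutation past $\theta$ required.

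Second, and this is a genuine gap, your UFD sentence does not work. You correctly note that when $R$ is not Gorenstein, every interior $\bbc_j$ has $F_\sigma(\bbc_j)\ge 2$ for \emph{some} facet $\sigma$ depending on $j$, so after factoring out the common $\prod_\sigma F_\sigma(\theta)$ each $G_{-\bbc_j}$ retains an extra linear factor $(F_{\sigma_j}(\theta)-1)$. But ``each generator carries an extra irreducible factor, hence no $\kk[\theta]$-combination equals the target'' is false: already in $\kk[x]$ one has $\langle x(x-1),\,x(x-2)\rangle=\langle x\rangle$. What you must show is that the leftover ideal
\[
\sum_{j}\Bigl\langle\prod_{i}\prod_{k=1}^{F_i(\bbc_j)-1}(F_i(\theta)-k)\Bigr\rangle
\]
is proper in $\kk[\theta]$, i.e.\ has a common zero. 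Since the ``excess'' facet $\sigma_j$ varies with $j$, and there may be more facets than variables, this is not automatic. Your parenthetical ``evaluate at a suitable point'' is the right idea, but locating that point is the entire content of the step. The paper supplies it via a geometric argument you are missing: pick $\bbc_1$ generating an extremal ray of the cone over $\{\bbc_j\}$, so that $d-1$ linearly independent forms $F_1,\dots,F_{d-1}$ satisfy $F_i(\bbc_1)=1$; extremality then forces every other $\bbc_j$ to have $F_{i_j}(\bbc_j)\ge 2$ for some $i_j\le d-1$. Together with $F_d(\bbc_1)\ge 2$, all excess facets lie among the linearly independent $F_1,\dots,F_d$, and the unique point with $F_1=\cdots=F_d=1$ is the desired common zero.
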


Before giving the proof of Theorem \ref{thm:gor_char}, we include two examples.

\begin{example}
\label{ex:rnc2}
Let $A = \left[\begin{smallmatrix}1&1&1\\ 0&1&2  \end{smallmatrix}\right]$,  $R = \kk[\NN A] = \kk [s,st,st^2]$, and $J=\omega_R = \<st\>$.  
Note that $R$ is Gorenstein since $\omega_R$ is principal.  
We will see in this case that $\omega_R \cD(R) = \cD(R,\omega_R)$. 

We denote the faces of $A$ by $\sigma_{1,1}$ and $\sigma_{2,1}$, and so that the primitive integral support functions are 
\[
F_{1,1}(\theta) = \theta_2
\quad\text{and}\quad 
F_{2,1}(\theta) = 2\theta_1 - \theta_2. 
\]
Then recall that
\[
H_{1,1,\bbm}(\theta) := F_{1,1}(\theta) + F_{1,1}(\bbm) \quad\text{and}\quad H_{2,1,\bbm}(\theta) := F_{2,1}(\theta) + F_{2,1}(\bbm)
\]
and
\[
G_{\bbm}(\theta) := \prod_{i=0}^{-F_{1,1}(\bbm) - 1} (F_{1,1}(\theta) -i) \prod_{j=0}^{-F_{2,1}(\bbm) -1} (F_{2,1}(\theta) - j).
\]

By (the proof of) Theorem~\ref{thm:normal-diffops}, 
\begin{align*}
\cD(R,\omega_{R})_{\bbm} 
&= s^{m_1}t^{m_2} \left\< 
G_{\bbm}(\theta)\cdot 
\left( 
\displaystyle\prod_{i=1}^2
\displaystyle\prod_{F_{i,1}(\bbm)\leq 0} 
H_{{i,1},\bbm}(\theta)\right)\right\>   \\
&= s^{m_1}t^{m_2}  
\left\< \prod_{i=0}^{-F_{1,1}(\bbm)} (F_{1,1} (\theta) - i)   \prod_{j=0}^{-F_{2,1}(\bbm)} (F_{2,1}(\theta) - j) \right\>\\
&{= s^{m_1}t^{m_2}  
\left\< \prod_{i=0}^{-m_2} (\theta_2 - i)   \prod_{j=0}^{-2m_1+m_2} (2\theta_1-\theta_2 - j) \right\>},
\end{align*}
where, by convention, an empty product is $1$ and $\left<- \right>$ denotes an ideal in $\kk [\theta_1,\theta_2]$.  
Multiplying the expression for $\cD(R)$ given in Theorem~\ref{thm:DRd} by $\omega_{R}$, we obtain 
\[ 
\omega_{R} \cD(R) = 
\<st\>\cdot 
\bigoplus_{m \in \ZZ^2} s^{m_1}t^{m_2} \cdot 
\langle G_{\bbm}(\theta)\rangle.
\] 
Denote ${\bf 1} := (1,1)$.  Then we have
\begin{align*}
(\omega_{R} \cD(R))_{\bbm} 
&= s^{m_1}t^{m_2} \cdot 
\langle G_{\bbm -{\bf 1}}(\theta) \rangle \\
&= s^{m_1}t^{m_2}\cdot \left< \prod_{i=0}^{-F_{1,1}(\bbm - {\bf 1}) -1} (F_{1,1} (\theta) - i)\prod_{i=0}^{-F_{2,1}(\bbm - {\bf 1})-1} (F_{2,1}(\theta) - i) \right> \\
&= s^{m_1}t^{m_2}\cdot  \left<\prod_{i=0}^{-F_{1,1}({\bbm}) + F_{1,1}({\bf 1}) -1} (F_{1,1} (\theta) - i)\prod_{i=0}^{-F_{2,1}({\bbm}) + F_{2,1}({\bf 1}) -1}(F_{2,1}(\theta) - i) \right> \\
&= s^{m_1}t^{m_2} \cdot\left< \prod_{i=0}^{-F_{1,1}({\bbm})} (F_{1,1} (\theta) - i) \prod_{i=0}^{-F_{2,1}({\bbm})} (F_{2,1}(\theta) - i) \right>\\
&{= s^{m_1}t^{m_2}  
\left\< \prod_{i=0}^{-m_2} (\theta_2 - i)   \prod_{j=0}^{-2m_1+m_2} (2\theta_1-\theta_2 - j) \right\>.}
\end{align*}
It follows that $\omega_R \cD(R) = \cD(R,\omega_R)$.
\end{example}

\begin{example}
\label{ex:rnc3}
Let $A = \left[\begin{smallmatrix}1&1&1&1\\ 0&1&2&3  \end{smallmatrix}\right]$,  $R = \kk[\NN A] = \kk [s,st,st^2,st^3]$, and $J=\omega_R = \<st,st^2\>$.  
Note that $R$ is not Gorenstein. 
We denote the faces of $A$ by $\sigma_{1,1}$ and $\sigma_{2,1}$, and so that the primitive integral support functions are 
\[
F_{1,1}(\theta) = \theta_2
\quad\text{and}\quad 
F_{2,1}(\theta) = 3\theta_1 - \theta_2. 
\]
Then for $\bbm = (m_1,m_2)$, we have
\[
H_{1,1,\bbm}(\theta) = F_{1,1}(\theta) + F_{1,1}(\bbm) \quad\text{and}\quad H_{2,1,\bbm}(\theta) = F_{2,1}(\theta) + F_{2,1}(\bbm)
\]
and
\[
G_{\bbm}(\theta) = \prod_{i=0}^{-F_{1,1}(\bbm) - 1} (F_{1,1}(\theta) -i) \prod_{i=0}^{-F_{2,1}(\bbm) -1} (F_{2,1}(\theta) - i).
\]

Multiplying the expression for $\cD(R)$ given in Theorem~\ref{thm:DRd} by $\omega_R$, we obtain
\[ 
\omega_R \cD(R) = 
\<st,st^2\>\cdot 
\bigoplus_{m \in \ZZ^2} s^{m_1}t^{m_2} \cdot\langle G_{\bbm}(\theta) \rangle, 
\]
so that
\[ 
(\omega_R \cD(R))_{\bbm} = 
 s^{m_1}t^{m_2} \cdot
\left\langle G_{\bbm - (1,1)}(\theta), G_{\bbm - (1,2)}(\theta)\right\rangle, 
\]
where 
\begin{align*}G_{\bbm - (1,1)}(\theta)& = \prod\limits_{j=0}^{-F_{1,1}({\bbm}-(1,1))-1 } (F_{1,1}(\theta)-j)\prod\limits_{j=0}^{-F_{2,1}({\bbm}-(1,1))-1} (F_{2,1}(\theta)-j)\\
&=\prod\limits_{j=0}^{-F_{1,1}({\bbm}) } (F_{1,1}(\theta)-j)\prod\limits_{j=0}^{-F_{2,1}({\bbm})+1} (F_{2,1}(\theta)-j)\\
&{=\prod\limits_{j=0}^{-m_2 } (\theta_2-j)\prod\limits_{j=0}^{-3m_1+m_2+1} (3\theta_2-\theta_1-j)}
\end{align*}

when $F_{1,1}({\bbm}) {=m_2}\leq 0 \text{ and }  F_{2,1}({\bbm}){=3m_1-m_2 }\leq 1$ and 
\begin{align*}G_{\bbm - (1,2)}(\theta) 
&=\prod\limits_{j=0}^{-F_{1,1}({\bbm}-(1,2))-1} (F_{1,1}(\theta)-j)\prod\limits_{j=0}^{-F_{2,1}({\bbm}-(1,2))-1 } (F_{2,1}(\theta)-j)\\
&=\prod\limits_{j=0}^{-F_{1,1}({\bbm})+1} (F_{1,1}(\theta)-j)\prod\limits_{j=0}^{-F_{2,1}({\bbm}) } (F_{2,1}(\theta)-j)\\
&{=\prod\limits_{j=0}^{-m_2+1} (\theta_2-j)\prod\limits_{j=0}^{-3m_1+m_2 } (3\theta_1-\theta_2-j)}\\
\end{align*}
when $ F_{1,1}({\bbm}){=m_2} \leq 1 \text{ and }  F_{2,1}({\bbm}){=3m_1-m_2} \leq 0$.

On the other hand, by Theorem~\ref{thm:normal-diffops} and  a computation similar to the one in Example~\ref{ex:rnc2},
\begin{align*}
\cD(R, \omega_R)_{\bbm} &= 
s^{m_1}t^{m_2} \cdot 
\left\< \prod_{i=0}^{-F_{1,1}({\bbm})} (F_{1,1} (\theta) - i) \prod_{i=0}^{-F_{2,1}({\bbm})}
 (F_{2,1}(\theta) - i) \right\>\\
&{= 
s^{m_1}t^{m_2} \cdot 
\left\< \prod_{i=0}^{-m_2} (\theta_2- i) \prod_{i=0}^{-3m_1+m_2}
 (3\theta_1-\theta_2 - i) \right\>}\\
\end{align*}
when $ F_{1,1}({\bbm}){=m_2} \leq 0 \text{ and }  F_{2,1}({\bbm}) {=3m_1-m_2}\leq 0$.

To see $\omega_R \cD(R)$ and $\cD(R, \omega_R)$ are different, consider any degree, say $\bbm = (-1,-1)$.  Note that the degree $\bbm$ piece of $\cD(R, \omega_R)$ is generated by one element, namely
\[
s^{-1}t^{-1} \prod_{i=0}^1 (F_{1,1}(\theta) - i) \prod_{i=0}^2 (F_{2,1}(\theta)-i)
\]
On the other hand, the degree $\bbm$ piece of $\omega_R\cD(R)$ is generated by two elements; it is generated in $\kk [\theta_1,\theta_2]$ by
\[
s^{-1}t^{-1} \prod_{i=0}^1 (F_{1,1}(\theta) - i) \prod_{i=0}^3 (F_{2,1}(\theta)-i) \quad\text{and}\quad s^{-1}t^{-1} \prod_{i=0}^2 (F_{1,1}(\theta) - i) \prod_{i=0}^2 (F_{2,1}(\theta)-i)
\]
In particular,
\[
s^{-1}t^{-1} \prod_{i=0}^1 (F_{1,1}(\theta) - i) \prod_{i=0}^2 (F_{2,1}(\theta)-i) \in D(R,\omega_R)
\]
but
\[
s^{-1}t^{-1} \prod_{i=0}^1 (F_{1,1}(\theta) - i) \prod_{i=0}^2 (F_{2,1}(\theta)-i) \notin \omega_R\cD(R).
\qedhere
\]
\end{example}

In order to prove Theorem~\ref{thm:gor_char}, we will need some preliminary results. 

\begin{lem}\cite[Theorem 6.33]{Bruns-Gub}
\label{lem:+Claim}
Let $R$ be a normal affine semigroup ring of dimension $d$, and let $F_{\sigma_1}, \dots, F_{\sigma_{r}}$ be the support functions of the facets ${\sigma_i}$ of the semigroup defining $R$. 
Let $\omega_R$ be the intersection of all graded height one prime ideals of $R$. 
Then $\omega_R$ is a principal ideal if and only if 
there exists $t^{\bbc} \in \omega$ for some ${\bbc} \in \ZZ^d$ such that
$F_{\sigma_i}({\bbc}) =1$
for all $i = 1, \dots, k$.
\end{lem}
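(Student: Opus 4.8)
This is \cite[Theorem~6.33]{Bruns-Gub}; for completeness I sketch how one would argue it. The plan is to reduce everything to the support functions $F_{\sigma_i}$ together with the normality identity $\NN A = \RR_{\geq 0}A \cap \ZZ^d$. The first step is to record the standard monomial description of $\omega_R$: since the graded height one primes of the normal semigroup ring $R$ are precisely the $P_{\sigma_i}$, one has, as a $\ZZ^d$-graded vector space,
\[
\omega_R \;=\; \bigcap_{i=1}^{r} P_{\sigma_i} \;=\; \bigoplus_{\substack{\bba \in \NN A \\ F_{\sigma_i}(\bba)\,\geq\,1 \ \forall i}} \kk\cdot t^{\bba},
\]
the sum over the interior lattice points of $\RR_{\geq 0}A$; here $F_{\sigma_i}(\bba) > 0$ is equivalent to $F_{\sigma_i}(\bba)\geq 1$ because $F_{\sigma_i}$ is integer-valued on $\ZZ^d$.

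For the implication $(\Leftarrow)$ I would take $\bbc \in \ZZ^d$ with $t^{\bbc}\in\omega_R$ and $F_{\sigma_i}(\bbc)=1$ for every $i$, and prove $\omega_R = R\cdot t^{\bbc}$. The inclusion $R\cdot t^{\bbc}\subseteq\omega_R$ is immediate, since $F_{\sigma_i}(\bba+\bbc)=F_{\sigma_i}(\bba)+1\geq 1$ for $\bba\in\NN A$. For the reverse inclusion, a monomial generator $t^{\bbd}$ of $\omega_R$ has $F_{\sigma_i}(\bbd)\geq 1 = F_{\sigma_i}(\bbc)$, hence $F_{\sigma_i}(\bbd-\bbc)\geq 0$ for all $i$, so $\bbd-\bbc\in\RR_{\geq 0}A\cap\ZZ^d=\NN A$ by normality, and thus $t^{\bbd}=t^{\bbc}\cdot t^{\bbd-\bbc}\in R\cdot t^{\bbc}$.

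For the implication $(\Rightarrow)$, assume $\omega_R$ is principal. I would first observe that $\omega_R$ is then generated by a single monomial $t^{\bbc}$: choosing a linear functional $\ell$ on $\ZZ^d$ positive on $\NN A\setminus\{\boldzero\}$ makes $R$ positively $\ZZ$-graded with $R_0=\kk$, so graded Nakayama supplies a homogeneous generator $g$ of $\omega_R$ for this coarser grading; since $\omega_R$ is also $\ZZ^d$-graded, each $\ZZ^d$-homogeneous component of $g$ lies in $\omega_R=Rg$, and comparing $\ell$-degrees forces $g$ to be a scalar multiple of a single monomial $t^{\bbc}$. As $t^{\bbc}\in\omega_R$, the point $\bbc$ is interior, so $F_{\sigma_i}(\bbc)\geq 1$ for all $i$; the goal is to upgrade these to equalities. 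Fixing a facet $\sigma$, the crucial step will be to construct an interior lattice point $\bbd$ with $F_{\sigma}(\bbd)=1$. For this I would pick a lattice point $\bm{v}\in\ZZ^d$ in the relative interior of the $(d-1)$-dimensional rational cone $\RR_{\geq 0}\sigma$ (a suitably scaled positive rational combination of its generators): such a $\bm{v}$ has $F_{\sigma}(\bm{v})=0$ and $F_{\sigma'}(\bm{v})>0$ for every facet $\sigma'\neq\sigma$, since a point of $\RR_{\geq 0}A$ on which some other $F_{\sigma'}$ vanished would lie in the proper face $\RR_{\geq 0}A\cap\{F_{\sigma}=0\}\cap\{F_{\sigma'}=0\}$ of $\RR_{\geq 0}\sigma$. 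Using $F_{\sigma}(\ZZ^d)=\ZZ$, choose $\bbe\in\ZZ^d$ with $F_{\sigma}(\bbe)=1$ and set $\bbd=N\bm{v}+\bbe$ for $N$ large; then $F_{\sigma}(\bbd)=1$ while $F_{\sigma'}(\bbd)=NF_{\sigma'}(\bm{v})+F_{\sigma'}(\bbe)>0$ for all $\sigma'\neq\sigma$ once $N>\max_{\sigma'}|F_{\sigma'}(\bbe)|$, so $\bbd$ is interior and $t^{\bbd}\in\omega_R=R\cdot t^{\bbc}$. Hence $\bbd-\bbc\in\NN A$, so $0\leq F_{\sigma}(\bbd-\bbc)=1-F_{\sigma}(\bbc)$, i.e.\ $F_{\sigma}(\bbc)\leq 1$; together with $F_{\sigma}(\bbc)\geq 1$ this yields $F_{\sigma}(\bbc)=1$, and since $\sigma$ was an arbitrary facet we are done.

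The step I expect to be the main obstacle is the geometric construction in $(\Rightarrow)$: producing a lattice point in the relative interior of each facet and then checking that the perturbed, rescaled point $\bbd=N\bm{v}+\bbe$ is genuinely interior to $\RR_{\geq 0}A$ (strictly positive under every support function). The remaining arguments are routine manipulations with the $F_{\sigma_i}$ and with the normality of $R$, so once the construction of $\bbd$ is in place the proof should close quickly.
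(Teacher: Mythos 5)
Your proof is correct. Note that the paper does not prove this lemma at all --- it is quoted directly from Bruns--Gubeladze \cite[Theorem 6.33]{Bruns-Gub} --- so there is no argument to compare against; your sketch is a sound self-contained derivation, with both directions (the normality argument $F_{\sigma_i}(\bbd-\bbc)\geq 0 \Rightarrow \bbd-\bbc\in\NN A$ for $(\Leftarrow)$, and the construction of an interior lattice point $\bbd=N\bm{v}+\bbe$ with $F_\sigma(\bbd)=1$ for $(\Rightarrow)$) going through under the paper's standing hypotheses that $\ZZ A=\ZZ^d$ and $\RR_{\geq 0}A$ is pointed.
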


We first show that principal monomial ideals $J$ behave well (i.e., $J\cD(R)=\cD(R,J)$) for affine normal semigroup rings.

\begin{prop} 
\label{prop:principalideals}
Let $R$ be a normal affine semigroup ring and let $J=\<t^{\bbc}\>$, 
then 
\[
J \cD(R) = \cD(R, J).
\]
\end{prop}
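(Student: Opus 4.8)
The plan is to establish the equality degree by degree in the natural $\ZZ^d$-grading, relying on the Saito--Traves structure theorem (Theorem~\ref{thm:DRd}) together with the observation that passing to a principal monomial ideal merely \emph{translates} the relevant set $\Omega$. Write $J=\langle t^{\bbc}\rangle$ with $\bbc\in\NN A$, so that $J=t^{\bbc}R$ is spanned by the monomials $t^{\bbv}$ with $\bbv\in\bbc+\NN A$.

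First I would dispose of the easy containment: for $\delta\in\cD(R)$ and $f\in J$ we have $(f\delta)(R)=f\cdot\delta(R)\subseteq fR\subseteq J$, so $f\delta\in\cD(R,J)$; hence $J\cD(R)\subseteq\cD(R,J)$. For the reverse containment I would fix a multidegree $\bbm\in\ZZ^d$ and describe $\cD(R,J)_{\bbm}$ explicitly. By the Saito--Traves description, $\cD(R)_{\bbm}=t^{\bbm}\cdot\II(\Omega(\bbm))$, where $\Omega(\bbm)=\{\bba\in\NN A\mid\bba+\bbm\notin\NN A\}$ and $\II(\Omega(\bbm))$ is the ideal of $q(\theta)\in\kk[\theta]$ vanishing on $\Omega(\bbm)$; moreover Theorem~\ref{thm:DRd} identifies $\II(\Omega(\bbm))=\langle G_{\bbm}(\theta)\rangle$ since $R$ is normal. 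An operator $\delta=t^{\bbm}q(\theta)$ acts by $\delta\ast t^{\bba}=q(\bba)\,t^{\bbm+\bba}$ (interpreted as $0$ in $R$ when $\bbm+\bba\notin\NN A$), and $t^{\bbm+\bba}\in J$ precisely when $\bbm+\bba-\bbc\in\NN A$. Combining this with the vanishing of $q$ on $\Omega(\bbm)$ already forced by $\delta\in\cD(R)$, one sees that $\delta(R)\subseteq J$ if and only if $q$ vanishes on $\{\bba\in\NN A\mid\bbm+\bba-\bbc\notin\NN A\}=\Omega(\bbm-\bbc)$.

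The key step is then purely formal: because $\bbc\in\NN A$, one has $\Omega(\bbm)\subseteq\Omega(\bbm-\bbc)$ (if $\bba+\bbm-\bbc\in\NN A$ then $\bba+\bbm\in\NN A$), so the condition "$q$ vanishes on $\Omega(\bbm-\bbc)$" automatically implies $q\in\II(\Omega(\bbm))$, i.e.\ $\delta\in\cD(R)$. Hence $\cD(R,J)_{\bbm}=t^{\bbm}\cdot\II(\Omega(\bbm-\bbc))=t^{\bbm}\langle G_{\bbm-\bbc}(\theta)\rangle$, the last equality again by Theorem~\ref{thm:DRd}. On the other hand $J\cD(R)=t^{\bbc}\cD(R)$, so $(J\cD(R))_{\bbm}=t^{\bbc}\cdot\cD(R)_{\bbm-\bbc}=t^{\bbc}\cdot t^{\bbm-\bbc}\langle G_{\bbm-\bbc}(\theta)\rangle=t^{\bbm}\langle G_{\bbm-\bbc}(\theta)\rangle$. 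Comparing the two computations gives $\cD(R,J)_{\bbm}=(J\cD(R))_{\bbm}$ for every $\bbm$, and summing over $\bbm$ finishes the proof.

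I do not anticipate a genuine obstacle here: the argument is essentially bookkeeping with the graded decomposition, and the one substantive ingredient---that for normal $R$ the vanishing ideal of $\Omega(\bbn)$ is exactly $\langle G_{\bbn}(\theta)\rangle$---is already available as Theorem~\ref{thm:DRd}. The mildly delicate point to get right is the translation between "$\delta$ maps $R$ into $J$" and "$q$ vanishes on $\Omega(\bbm-\bbc)$": this requires treating separately the monomials $t^{\bba}$ with $\bbm+\bba\notin\NN A$ and those with $\bbm+\bba\in\NN A\setminus(\bbc+\NN A)$, both of which are subsumed once one records the inclusion $\Omega(\bbm)\subseteq\Omega(\bbm-\bbc)$ coming from $\bbc\in\NN A$.
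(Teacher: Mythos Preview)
Your argument is correct and follows the same overall strategy as the paper: work degree by degree, identify $(J\cD(R))_{\bbm}=t^{\bbm}\langle G_{\bbm-\bbc}(\theta)\rangle$ via Theorem~\ref{thm:DRd}, and match it against $\cD(R,J)_{\bbm}$. The one noteworthy difference is in how $\cD(R,J)_{\bbm}$ is pinned down. The paper rewrites $G_{\bbm-\bbc}(\theta)$ in terms of $G_{\bbm}(\theta)$ and the shifted linear forms $H_{i,\bbm}(\theta)$ and then recognizes the result as the generator of $\cD(R,J)_{\bbm}$ coming out of the computation in Theorem~\ref{thm:normal-diffops}. You instead argue directly from the action $t^{\bbm}q(\theta)\ast t^{\bba}=q(\bba)t^{\bbm+\bba}$ that $\cD(R,J)_{\bbm}=t^{\bbm}\,\II(\Omega(\bbm-\bbc))$, using the clean inclusion $\Omega(\bbm)\subseteq\Omega(\bbm-\bbc)$ that holds because $\bbc\in\NN A$, and then apply Theorem~\ref{thm:DRd} once more with $\bbm$ replaced by $\bbm-\bbc$. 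Your route is slightly more self-contained (it does not need the explicit $\cD(R,J)$ formula from Theorem~\ref{thm:normal-diffops}) and makes transparent that the principal case is literally a translation of the Saito--Traves description; the paper's route, by contrast, makes visible exactly which extra linear factors the generator of $\cD(R,J)_{\bbm}$ acquires relative to $G_{\bbm}(\theta)$.
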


\begin{proof}
Since $J \cD(R) \subseteq \cD(R, J)$, it suffices to prove that $ J\cD(R)_{\bbm} = \cD( R, J)_{\bbm}$ for all multidegrees $\bbm$.
We do so by showing that they are both cyclic and are generated by the same differential operator.  

Since $J= \< t^{\bbc} \>$,
by the expression obtained in Theorem~\ref{thm:DRd}, we have 
\[ 
J \cD(R)_{\bbm} = 
t^{\bbm} \cD(R) _{\bbm-\bbc } = t^{\bbm} \left\< 
G_{{\bbm-\bbc}}(\theta)\right\>. 
\]
Since $F_{i}$ is linear, we have 
\[ 
F_{i}( - ( {\bbm -\bbc} )) -F_{i}(\bbc) = - F_{i}({\bbm} ) + F_{i}({\bbc} ) - F_{i}(\bbc) 
= - F_{i}({\bbm}). 
\] 
Hence the single generator for the graded piece $J \cD(R)_{\bbm}$ can be viewed as 
\[
t^{\bbm} \cdot 
\left\< G_{\bbm}(\theta)\prod_{i = 1}^r H_{i,{\bbm}}(\theta)\right\>,
\]
which is exactly the generator of $\cD(R, J)_{\bbm}$, completing the proof.
\end{proof}

Now we are ready to prove the main theorem of the section: Theorem~\ref{thm:gor_char}. As above, we will write $F_i$ and $H_{i,{\bbm}}$ for $F_{\sigma_i}$ and $H_{\sigma_i,{\bbm}}$, respectively.

\begin{proof}[Proof of Theorem \ref{thm:gor_char}]
Suppose first that $R$ is Gorenstein.  Then $\omega_R$ is principal, say $\omega_R= \< t^{\bbc} \>$. Now by Proposition~\ref{prop:principalideals}, we obtain $\omega_R\cD(R)=\cD(R, \omega_R)$.

Conversely assume that $R$ is not Gorenstein, that is, $\omega_R$ is not principal. Since $\cD(R, \omega_R)$ and $ \omega_R \cD(R)$ are both multigraded and it is always true that $\omega_R \cD(R) \subseteq D(R, \omega_R)$, 
we may prove that they are not equal by identifying their difference at $\bbm =\boldzero$. 

As mentioned above, $\cD(R, \omega_R)_{\bbm} $ is generated by a single element
${t^{\bbm}} 
\left\langle G_{{\bbm}}(\theta)\prod_{i = 1}^r H_{i,{\bbm}}(\theta)\right\rangle$. 
Thus  
\[ 
\cD(R, \omega_R)_{\boldzero} 
= 
\left\< \prod_{ i = 1}^r 
H_{i,{\boldzero}}(\theta) \right\> 
= \left\< F_{1}(\theta) \cdots F_{r}(\theta) \right\>. 
\]  
Let $\omega_R$ be minimally generated by 
${t}^{{\bbc}_1}, {t}^{{\bbc}_2}, \dots, {t}^{{\bbc}_h}$. Then  
\[ 
( \omega \cD(R) )_{\bbm}  
= \sum_{j=1}^h  {t}^{ {\bbc}_j }  {t}^{ {\bbm} -  {\bbc}_j } \cdot\cD(R) _{{\bbm}  -{\bbc}_j  } 
= \sum_{j=1}^h  {t}^{ \bbm}\cdot \left\< 
G_{{\bbm-\bbc_j}}(\theta) \right\> 
=  {t}^{ \bbm}\cdot \left\< 
G_{{\bbm-\bbc_j}}(\theta) \ \bigg\vert \  j=1,2,\dots,h \right\>.
\]
In particular, when  ${\bbm} = \boldzero$, 
\[
( \omega \cD(R) )_{\boldzero} =   \left\< 
G_{-{\bbc_j}}(\theta)
\ \bigg\vert \  j=1,2,\dots,h \right\>.
\]
We know from Lemma~\ref{lem:+Claim} that some $F_{\sigma_i}({\bbc}_j) \neq 1$. 
Therefore 
\begin{align} 
\label{eq:omega-zero-ideal}
( \omega \cD(R) )_{\boldzero} 
& =    \sum_{j=1}^h  \left\langle \prod_{ i = 1}^r  F_{i}(\theta)(F_{i}(\theta)-1) \cdots (F_{i}(\theta)-(F_{i}(c_j)-1)) \right\rangle \\
& = 
\left\< F_{1}(\theta)F_{2}(\theta) \cdots F_{r}(\theta) \right\> 
\cap 
\sum_{j=1}^h  \left\< \prod_{i=1}^r  (F_{i}(\theta)-1) \cdots (F_{i}(\theta) - ( F_{i}( {\bbc}_j )-1 ) )  \right\>
\end{align}

We claim that the ideal
\begin{align}
\label{eq:proper-ideal}
\sum_{j=1}^h  \left\< \prod_{i=1}^r  (F_{i}(\theta)-1) \cdots (F_{i}(\theta) - ( F_{i}( {\bbc}_j )-1 ) )  \right\>
\end{align}
from \eqref{eq:omega-zero-ideal} is a proper ideal of $\kk[\theta]$. 
Once this is established, it will follow from \eqref{eq:omega-zero-ideal} that the inclusion 
\[
( \omega_R \cD(R) )_{\boldzero}  \subseteq D(R, \omega_R)_{\boldzero} = \< F_1(\theta)\cdots F_r(\theta)\>,
\]
is proper, which will conclude the proof. 

To see that \eqref{eq:proper-ideal} is a proper ideal of $\kk[\theta]$, we may assume after re-ordering that $\bbc_1$ generates a ray of the cone 
\[
\cC 
:= \RR_{\geq 0}\{\bbc_1,\bbc_2,\dots,\bbc_h\}
= \{ \bbm\in\NN A\mid F_1(\bbm)\geq 1,\dots,F_r(\bbm)\geq 1\} 
\subseteq \RR^d
\]
over the exponents of the minimal generators of $\omega_R$
and, in light of Lemma~\ref{lem:+Claim}, that there are $d-1$ linearly independent primitive integral support functions, which after re-labelling can be taken as $F_1,\dots, F_{d-1}$, such that 
\[
F_1(\bbc_1)=F_2(\bbc_1)=\cdots= F_{d-1}(\bbc_1)=1 
\quad\text{and}\quad 
F_d(\bbc_1)>1. 
\]
(There may be more $i$ such that $F_i(\bbc_1)=1$, but we need to make use of only a linearly independent set of them, which is necessarily of size $d-1$, since $\bbc_1$ generates a ray of $\cC$.) 

\begin{example}
We pause our proof to give an example illustrating the newly-introduced notation.
Consider the case $d=3$ and refer to Figure~\ref{fig:canmodnotGor} as a visualization. 
Fix $\bbc_1$ to be a vertex of the convex hull of the $\bbc_i$, or, rather, a generator of a ray of the Newton polyhedron of $\omega_R$. 
Without loss of generality, 
\[
F_1(\bbc_1)=1=F_2(\bbc_1)
\quad\text{and}\quad 
F_3(\bbc_1)>1.
\]
Further, there exists an $r$ such that 
\[
F_1(\bbc_i)>1 
\quad\text{for all}\quad 
r+1\leq i\leq h
\quad\text{and}\quad 
F_1(\bbc_i)=1 
\quad\text{for all}\quad 
1\leq i\leq r.
\]
Note that in Figure~\ref{fig:canmodnotGor}, we have chosen $r=3$.
Now, since $\bbc_1$ is a vertex, it must be that 
\[
F_2(\bbc_i)>1 
\quad\text{for all}\quad 
2\leq i\leq r.
\]
Hence, $\<F_1(\theta)-1,F_2(\theta)-1,F_3(\theta)-1\>$ is a primary component of \eqref{eq:proper-ideal}.
\vspace*{-4mm}
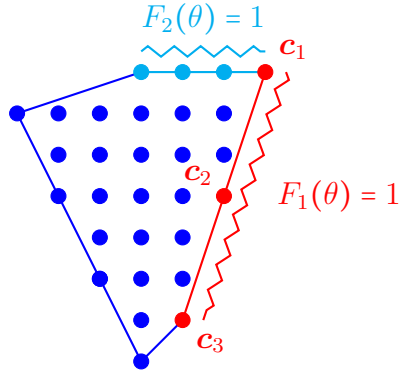
\begin{figure}[ht]
\centering
\begin{tikzpicture}[scale=0.55]
\draw[blue, thick] (0,0) -- (1,1);
\draw[blue, thick] (0,0) -- (-3,6);
\draw[blue, thick] (-3,6) -- (0,7);
\draw[cyan, thick] (0,7) -- (3,7);
\draw[decorate,decoration={zigzag,amplitude=.7mm},cyan, thick] (0,7.5) -- node[above=2pt]{$F_2(\theta)=1$} (3,7.5);
\draw[red, thick] (3,7) -- (1,1);
\draw[decorate,decoration={zigzag,amplitude=.7mm},red, thick] (3.5,7) -- node[right=8pt]{$F_1(\theta)=1$} (1.5,1);
\foreach \x in {0,1,2}{ \node[draw,circle,inner sep=2pt,cyan,fill] at (\x,7) {};}
\foreach \x in {1,2,3}{ \node[draw,circle,inner sep=2pt,red,fill] at (\x,3*\x-2) {};}
\foreach \x in {-2,-1,...,2}{
      \foreach \y in {5,6}{
        \node[draw,circle,inner sep=2pt,blue,fill] at (\x,\y) {};}}
\foreach \x in {-1,0,1}{
      \foreach \y in {2,3,4}{
        \node[draw,circle,inner sep=2pt,blue,fill] at (\x,\y) {};}}
\foreach \y in {0,1}{ \node[draw,circle,inner sep=2pt,blue,fill] at (0,\y) {};}
\foreach \x in {-3,-2,-1}{ \node[draw,circle,inner sep=2pt,blue,fill] at (\x,-2*\x) {};}
\filldraw[red] (3,7) circle (2pt) node[above right=1pt]{${\bbc_1}$};
\filldraw[red] (2,4) circle (2pt) node[above left=.2pt]{${\bbc_2}$};
\filldraw[red] (1,1) circle (2pt) node[below right=1pt]{${\bbc_3}$};
\end{tikzpicture}
\caption{Hypothetical generators of $\omega_R$ in dimension 3}
\label{fig:canmodnotGor}
\end{figure}
\end{example}

We resume our proof in full generality.  Given that $\bbc_1$ is a ray of the cone $\cC$ with $F_i(\bbc_1)=1$ for every $i$ with $1\leq i\leq d-1$, it follows that, for every $j>1$, there is an $i_j$ with $1\leq i_j\leq d-1$ such that $F_{i_j}(\bbc_j)>1$. 
Now set $i_1 = d$ because $F_d(\bbc_1)>1$. 
Then since $1\leq i_j\leq d$ for all $j$ and the hyperplanes defined by $\{F_i(\theta)=0\}_{i=1}^{d}$ are necessarily non-parallel, the ideal
\[
\< F_{i_1}(\theta) -1, F_{i_2}(\theta)-1,\ldots, F_{i_d}(\theta)-1 \>
\]
is a proper, primary (in fact, prime) component of \eqref{eq:proper-ideal}. 
Thus \eqref{eq:proper-ideal} is a proper ideal of $\kk[\theta]$, as desired to establish the final needed claim. 
\end{proof}

\bibliographystyle{amsalpha}
\bibliography{refs}
\end{document}